\newcommand{\bmu}{{\mathbf \mu}}
\newtheorem{theorem}{Theorem}
\newtheorem{cor}[theorem]{Corollary}
\newtheorem{lem}[theorem]{Lemma}
\newcommand{\ed}{\ \stackrel{d}{=} \ }
\newcommand{\LL}{\mbox{${\mathcal L}$}}
\newcommand{\OO}{\mbox{${\mathcal O}$}}
\newcommand{\Rbold}{\mbox{${\mathbb R}$}}
\newcommand{\Zbold}{\mbox{${\mathbb Z}$}}
\newcommand{\Ebold}{\mbox{${\mathbb E}$}}
\newcommand{\Pbold}{\mbox{${\mathbb P}$}}
\newcommand{\Ibold}{\mbox{${\mathbb I}$}}
\newcommand{\bE}{{\mathbf E}}
\begin{document}

\title{P\'olya Urn Schemes with Infinitely Many Colors}
\author{Antar Bandyopadhyay} 
\address[Antar Bandyopadhyay]{Theoretical Statistics and Mathematics Unit \\
         Indian Statistical Institute, Delhi Centre \\ 
         7 S. J. S. Sansanwal Marg \\
         New Delhi 110016 \\
         INDIA}
\address{Theoretical Statistics and Mathematics Unit, 
         Indian Statistical Institute, Kolkata;
         203 B. T. Road, Kolkata 700108, INDIA}
\email{antar@isid.ac.in}          
\author{Debleena Thacker}  
\address[Debleena Thacker]{Theoretical Statistics and Mathematics Unit \\
         Indian Statistical Institute, Delhi Centre \\ 
         7 S. J. S. Sansanwal Marg \\
         New Delhi 110016 \\
         INDIA}
\email{thackerdebleena@gmail.com}

\begin{abstract}
In this work we introduce a new type of urn model with
infinite but countable many colors indexed by an appropriate 
infinite set. We mainly consider the indexing set of colors to be the
$d$-dimensional integer lattice and consider balanced replacement schemes  
associated with bounded increment random walks on it. 
We prove central and local limit theorems for the random color of the $n$-th selected ball and 
show that irrespective of the null recurrent or transient behavior of the underlying random walks,
the asymptotic distribution is Gaussian after appropriate centering and scaling. 
We show that the order of any non-zero centering is always $\OO\left(\log n\right)$ and
the scaling is $\OO\left(\sqrt{\log n}\right)$.
The work also provides similar results for urn models with infinitely many 
colors indexed by more general lattices in $\Rbold^d$. We introduce a novel technique of
representing the random color of the $n$-th selected ball as a suitably sampled 
point on the path of the underlying random walk. This helps us to derive the central and 
local limit theorems.
\end{abstract}

\keywords{
Central limit theorem, infinite color urn, local limit theorem, random walk, reinforcement processes, 
urn models.} 

\subjclass[2010]{Primary: 60F05, 60F10; Secondary: 60G50}

\maketitle

\section{Introduction}
\label{Sec:Intro}

\subsection{Background and Motivation}
\label{SubSec:Back}
In recent years, there has been a wide variety of work on \emph{random reinforcement models} of 
various kind \cite{Da90, Vl03, Svante2, BaiHu05, FlDuPu06, 
VlTa07, Pe07, maulik1, CoVl09, DasMau11, CrGeVolWaWa11, LaPa13, CoCoLi13, ChHsYa14}. 
In particular, there has been several work on different kind of \emph{urn models}
and their generalizations 
\cite{Svante2, BaiHu05, FlDuPu06, maulik1, DasMau11, CrGeVolWaWa11, LauLimi12, 
CoCoLi13, LaPa13, ChHsYa14}. 
For \emph{occupancy urn models}, where one considers recursive addition of balls in to
finite or infinite number of boxes,
there are some works which 
introduce models with infinitely many colors, typically represented by the boxes
\cite{Du89, GneHanPit07, HwSv08}. However, other than the
classical work by Blackwell and MacQueen \cite{BlackMac73}, there has not been much development of infinite
color generalization of the P\'olya urn scheme. 
In this paper, we introduce and analyze a new P\'{o}lya type urn scheme with countably infinitely 
many colors indexed by $\Zbold^d$. 

Starting from the seminal work by P\'olya \cite{Polya30}, 
various types of urn schemes with finitely many colors have been 
widely studied in literature
\cite{Fri49, Free65, AthKar68, BagPal85, Pe90, Gouet, Svante1, Svante2,
BaiHu05, FlDuPu06, maulik1, maulik2, DasMau11, ChKu13, ChHsYa14}.
See \cite{Pe07} for an extensive survey of the known results. 
The generalized P\'olya urn scheme with finitely many colors can be described as follows: 
\begin{quote}
We start with an urn containing finitely many balls of different colors. 
At any time $n\geq 1$, a ball is selected \emph{uniformly} at random from the urn, 
the color of the selected ball is
noted, and it is returned to the urn along with a set of balls
of various colors which may depend on the color of the selected ball.
\end{quote}
The goal is to study the asymptotic properties of the configuration of the urn. 
Suppose there are $K \geq 1$ different colors and let
$U_{n}=\left(U_{n,1},U_{n,2}\ldots, U_{n,K}\right)$, where $U_{n,j}$ denotes the number of balls of color 
$j$ for $1 \leq j \leq K$. 
The dynamics of the urn model depends on the \emph{replacement policy} which can be presented by
a $K \times K$ matrix with non-negative entries, say 
$R := \left(\left(R\left(i,j\right)\right)\right)_{1 \leq i, j \leq K}$. 
In literature, $R$ 
is typically called the \emph{replacement matrix}.
The dynamics of the model can then be written as,
\begin{equation}
U_{n+1} = U_n + R_i
\label{Equ:Basic-Recurssion} 
\end{equation}
where $R_i$ is the $i$-th row of the replacement matrix $R$, where $i$ is the random 
color of the ball selected at the $\left(n+1\right)$-th draw. 
Although in the classical set up \cite{Polya30} the entries of the replacement matrix are
taken to be non-negative integers, but for studying the evaluation of the urn
such an assumption is not necessary.

A replacement matrix is said to be \emph{balanced}, if the row sums are constant. 
In this case, after every draw a constant number of balls are added to the urn. 
For such an urn, a standard technique is to 
divide each entry of the replacement matrix by the
constant row sum, thus without loss, one may assume that the row sums are all $1$. 
In that case, it is also customary to assume $U_0$ as a probability distribution on the
set of colors, which is to be interpreted as the probability distribution of 
the selected color of the first ball drawn from the urn. 
Note in this case the entries of $U_n = \left(U_{n,1},U_{n,2}\ldots, U_{n,K}\right)$
are no longer the number of balls of different colors, instead the entries of  
$U_n/\left(n+1\right)$ are the proportion of balls of various different colors. 
We will refer to it as the (random) configuration of the urn. 
It is useful noting here that the random probability mass function $U_n/\left(n+1\right)$  
represents 
the probability distribution of the random color of the $\left(n+1\right)$-th selected ball
given the $n$-th configuration of the urn. 
In other words, if $Z_n$ is the color of the ball selected at the $\left(n+1\right)$-th draw then
\begin{equation}
\mathbb{P}\left( Z_n = j \,\Big\vert\,U_0, U_1,\ldots, U_n \right) = \frac{U_{n,j}}{n+1}, \,\,\, 1 \leq j \leq K.
\label{Equ:Choise-Mass-Function} 
\end{equation}

Since $R$ is a stochastic matrix and $U_0$ a probability distribution on the set of colors,
we can now consider a Markov chain on the set of colors with transition matrix $R$ 
and initial distribution $U_0$. We call such a chain, a chain
associated with the urn model and vice-versa. In other words, given a balanced urn model we
can associate with it a unique Markov chain on the set of colors and conversely given a 
Markov chain there is an associated urn model with 
colors indexed by the state space. 
It is well known \cite{Gouet, Svante1, maulik1, maulik2, DasMau11} that the asymptotic properties of 
a balanced urn model with finitely many colors are often related to the qualitative
properties of this associated Markov chain on the finite state space.

The above formulation can now easily be generalized for infinitely many colors. More precisely, given any
set $S$ indexing the colors, a stochastic matrix $R$ on $S$ and an initial configuration $U_0$, one can define 
a process $\left(U_n\right)_{n \geq 0}$ by the equation \eqref{Equ:Basic-Recurssion} and 
\eqref{Equ:Choise-Mass-Function}. When $S$ is infinite
we will call such a process an \emph{urn model with infinitely many colors}. In this paper, we study 
such a process when $S = \Zbold^d$ and $R$ is the transition matrix of a bounded increment random walk
on $\Zbold^d$. This is a novel generalization of the P\'olya urn scheme which combines 
perhaps the two most classical models in probability theory, namely the urn model and the random walk.

Our main motivation to study such a process has been two fold. As mentioned earlier, it is known in the
literature \cite{Gouet, Svante1, maulik1, maulik2, DasMau11} 
that the asymptotic properties of a finite color urn depends on the qualitative 
properties of the under lying Markov chain. For example, for an irreducible aperiodic 
chain with $K$ colors,
it is shown in \cite{Gouet, Svante1} that
\begin{equation}
\frac{U_{n,j}}{n+1} \longrightarrow \pi_j \,\,\, \mbox{a.s.}
\label{Equ:pi-conv}
\end{equation}
for all $1 \leq j \leq K$, where $\pi=\left(\pi_j\right)_{1\leq j\leq K}$ is the unique stationary distribution. It is also know 
\cite{Svante1, Svante2}
that if the chain is reducible and $j$ is a transient state then
\begin{equation}
\frac{U_{n,j}}{n+1} \longrightarrow 0 \,\,\, \mbox{a.s.}
\label{Equ:conv-tran}
\end{equation}
Further non-trivial scalings have been derived for the reducible case \cite{Svante1, Svante2, maulik1, maulik2, DasMau11}. 
So one may conclude that asymptotic properties of
an urn model depends on the recurrence/transience of the underlying states. We want to investigate 
this relation when there are infinitely many colors. The bounded increment random walks on $\Zbold^d$ is a
rich class of examples of Markov chains on infinite states covering both the transient and null
recurrent cases. Needless to state that the no null recurrent state can appear in the finite case.
As we shall see later,
our study will indicate a significantly different phenomenon for the infinite color urn models 
associated with the bounded increment random walks on $\Zbold^d$. 
In fact, we shall show that the asymptotic configuration is approximately Gaussian,
irrespective of whether the underlying walk is transient or recurrent. 

Our other motivation comes from the work of Blackwell and MacQueen \cite{BlackMac73}, where the authors 
introduced a possibly infinite color generalization of the P\'olya urn scheme. In fact, their
generalization even allowed uncountably many colors; the set of colors 
typically taken as some Polish space. 
The model then described a process whose limiting distribution is the
\emph{Ferguson distribution} \cite{Black73, BlackMac73}, 
also known as the \emph{Dirichlet process prior} in the 
Bayesian statistics literature \cite{Fer73}. The replacement mechanism in \cite{BlackMac73} 
is a simple diagonal scheme,
which reinforces only the chosen color. As in the classical finite color P\'olya urn scheme 
where $R$ is the identity matrix, this
leads to \emph{exchangeable} sequence of colors. 
Our model complements this work where we consider 
replacement mechanisms with non-zero off diagonal entries. It is worth noting that the models we consider
do not include the Blackwell and MacQueen scheme \cite{BlackMac73} and our results show that
the asymptotic properties of our model are vastly different than those of
Blackwell and MacQueen \cite{BlackMac73}. 
We would also like to point out that due to the presence of off diagonal entries in the 
replacement matrix our models do not exhibit exchangeability and hence the techniques
used in this paper are entirely different and new.

\subsection{Model} 
\label{SubSec:Model}
Let $\left\{X_j\right\}_{j \geq 1}$ be i.i.d. random vectors taking values in $\Zbold^d$ with probability
mass function $p\left(u\right) := \Pbold\left(X_1 = u\right), u \in \Zbold^d$. We assume that the 
distribution
of $X_1$ is bounded, that is there exists a non-empty finite subset $B \subseteq \Zbold^d$ such that
$p\left(u\right) = 0$ for all $u \not\in B$. It is worthwhile to note that the assumption
of $B$ is finite may be removed. Instead, if we assume $X_1$ has 
moment generating function on an open interval around $0$, 
then all the results of this paper hold. But for simplicity, we will assume $B$ to be finite.

Throughout this paper 
we take the convention of writing all vectors as row vectors. 
Thus for a vector $x \in \Rbold^d$ we will write $x^T$ to denote it as a column vector.   
The notation $\langle \cdot , \cdot \rangle$ will denote 
the usual Euclidean inner product on $\Rbold^d$ and $\| \cdot \|$ the
the Euclidean norm. We shall always write
\begin{equation}
\begin{array}{rcl}
\bmu & := & \Ebold\left[X_1\right] \\
\varSigma & := & \Ebold\left[ X_1^T X_1 \right] \\
e\left(\lambda\right) & := & \Ebold\left[e^{\langle \lambda , X_1 \rangle}\right], \, \lambda \in \Rbold^d. \\
\end{array}
\label{Equ:Basic-Notations}
\end{equation}
We shall write
$\varSigma := \left(\left(\sigma_{ij}\right)\right)_{1 \leq i,j \leq d}$ and
assume that it is a positive definite matrix. 
Also
$\varSigma^{\frac{1}{2}}$ will denote the unique \emph{positive definite square root} of $\varSigma$,
that is, $\varSigma^{\frac{1}{2}}$ is a positive definite matrix such that
$\varSigma = \varSigma^{\frac{1}{2}} \varSigma^{\frac{1}{2}}$.
When the dimension $d=1$, we will denote the mean and variance simply by $\mu$ and $\sigma^2$ respectively
and in that case we assume $\sigma^2 > 0$.

Let $S_n := X_0 + X_1 + \cdots + X_n, n \geq 0$ be the random walk 
on $\Zbold^d$ starting at $X_0$ and with increments $\left\{X_j\right\}_{j \geq 1}$ which are independent. 
Needless to say that $\left\{S_n\right\}_{n \geq 0}$ is Markov chain with state-space $\Zbold^d$, 
initial distribution given by the distribution of $X_0$ and the transition matrix 
\[ R := \left(\left( p\left(v - u\right) \right)\right)_{u, v \in {\mathbb Z}^d}.\]

In this work, we consider 
the following infinite color generalization of P\'olya urn scheme where
the colors are indexed by $\Zbold^d$.
Let $U_n := \left(U_{n,v}\right)_{v \in {\mathbb Z}^d} \in [0, \infty)^{{\mathbb Z}^d}$
denote the configuration of the urn at time $n$, that is, 
\small
\[
\Pbold\left( \left(n+1\right)^{\mbox{th}} \mbox{\ selected ball has color\ } v 
\,\Big\vert\, U_n, U_{n-1}, \cdots, U_0 \right) 
\propto U_{n,v}, \, v \in \Zbold^d.
\]
\normalsize
Starting with $U_0$ which is a probability distribution we define $\left(U_n\right)_{n \geq 0}$
recursively as follows
\begin{equation}
\label{recurssion}
U_{n+1}=U_{n} + \chi_{n+1} R 
\end{equation}
where $\chi_{n+1} = \left(\chi_{n+1,v}\right)_{v \in {\mathbb Z}^d}$ is such that 
$\chi_{n+1,V}=1$ and $\chi_{n+1,u} = 0$ if $u \neq V$ where $V$ is the random color
chosen from the configuration $U_n$. In other words
\[
U_{n+1}=U_n + R_V
\]
where $R_V$ is the  $V^{\text{th}}$ row of the replacement matrix $R$. 
We will call 
the process $\left(U_n\right)_{n \geq 0}$ as the \emph{infinite color urn model} with
initial configuration $U_0$ and replacement matrix $R$. We will also refer to it as the 
\emph{infinite color urn model associated with the random walk $\left\{S_n\right\}_{n \geq 0}$ on $\Zbold^d$}.
Throughout this paper we will assume that 
$U_0 = \left(U_{0,v}\right)_{v \in {\mathbb Z}^d}$ is such that 
$U_{0,v} = 0$ for all but finitely many $v \in \Zbold^d$.

It is worth noting that 
\[
\sum_{u \in {\mathbb Z}^d} U_{n,u} = n + 1
\]
for all $n \geq 0$. If
$Z_n$ denotes the $\left(n+1\right)$-th selected color then
\begin{equation}
\Pbold\left(Z_n = v \,\Big\vert\, U_n, U_{n-1}, \cdots, U_0 \right) = \frac{U_{n,v}}{n+1}
\end{equation}
which implies
\begin{equation}
\Pbold\left(Z_n = v \right) = \frac{\Ebold\left[U_{n,v}\right]}{n+1}.
\end{equation}
In other words the expected proportion of the urn at time $n$ is given by the distribution of $Z_n$.

%

In Section \ref{Sec:General} we will further generalize the model when the associated random walk
takes values in other $d$-dimensional discrete lattices, for example, the \emph{triangular lattice} in
two dimensions. 

We like to note here that 
our model is a further a generalization of a subclass of models studied in \cite{CoCoLi13}, namely
the class of \emph{linearly reinforced models}. 
In \cite{CoCoLi13} the authors prove that for such models cardinality of all the colors will grow to infinity.
As we will see in the next section, our results will not only show that the cardinality of all colors will grow to infinity
but also provide the exact rates of their growths. 

\subsection{Notations}
\label{SubSec:Notations}
Most of the notations used in this paper are consistent with the literature on generalized urn models. 
For the sake of completeness we provide below a list of notations and conventions which we
use in the paper. 

\begin{itemize}
\item For two sequences $\left\{a_{n}\right\}_{n \geq 1}$ and $\left\{b_{n}\right\}_{n \geq 1}$ of positive 
real numbers, we will write $a_{n} \sim b_{n}$ if $\displaystyle \lim_{n \to \infty}\frac{a_{n}}{b_{n}}=1$.

\item As mentioned earlier, all vectors are written as row vectors unless otherwise stated. 
      For example, a finite dimensional vector 
$x \in \Rbold^d$ is written as 
$x=\left(x^{(1)},x^{(2)},\ldots,x^{(d)}\right)$ where $x^{(i)}$ denotes the $i$-th coordinate.
To be consistent with this notation matrices are multiplied to the right of the vectors.
The infinite dimensional vectors are written as $y=\left(y_{j}\right)_{j\in \mathcal{J}}$ where $y_{j}$
is the $j^{\text{th}}$ coordinate and $\mathcal{J}$ is the indexing set. Column vectors are denoted by $x^{T},$ where $x$ is a row vector. 

\item For any vector $x$, $x^2$ will denote a vector with the coordinates squared.

\item By $N_{d}\left(\bmu,\varSigma\right)$ we denote the $d$-dimensional Gaussian distribution with mean vector $\bmu \in \Rbold^d$ and
variance-covariance matrix $\varSigma$. 
For $d=1$, we simply write $N(\mu, \sigma^{2})$ with mean $\mu \in \Rbold$ and variance $\sigma^{2} > 0$.

\item The standard Gaussian measure on $\Rbold^d$ will be denoted by
$\Phi_d$ with its density by $\phi_d$ given by 
\[
\phi_d\left(x\right) := \frac{1}{\left(2 \pi \right)^{d/2}} e^{- \frac{\| x \|^2}{2}}, x \in \Rbold^d.
\]
For $d=1$, we will simply write $\Phi$ for the standard Gaussian measure on 
$\Rbold$ and $\phi$ for its density.


\item The symbol $\Rightarrow$ will denote weak convergence of probability measures.

\item The symbol $\stackrel{p}{\longrightarrow}$ will denote convergence in probability. 

\item For any two random variables/vectors $X$ and $Y$, we will write $X \stackrel{d}{=} Y$ to denote that $X$
and $Y$ have the same distribution. 

\end{itemize}

\subsection{Outline}
In the following section we state the main results, which we prove in Section \ref{Sec:Proofs}. 
In Section \ref{SubSec:Intermediate}, we state and prove two important results, which we use in the proofs 
of the main results. 
In Section \ref{Sec:General}, we further generalize our results for urns with infinitely many 
colors, where the color sets are indexed by other countable lattices on $\Rbold^d$.
In particular, we consider the example of the two dimensional triangular lattice. 
An elementary technical result which is needed in the proofs of the main results is deferred to the appendix.

\section{Main Results}
\label{Sec:Results} 
Throughout this paper we assume that
$\left(\Omega,\mathcal{F},\mathbb{P}\right)$ is a probability space on which all the random 
processes are defined.

\subsection{Weak Convergence of the Expected Configuration}
We present in this subsection the central limit theorem for the randomly selected color. The centering and scaling of the central limit theorem are of the order $\OO \left(\log n\right)$ and $\OO\left(\sqrt{\log n}\right)$ respectively. Such centering and scalings are available because the marginal distribution of the randomly selected color behaves like that of a delayed random walk, where the delay is of the order $\OO \left(\log n\right)$, see Theorem \ref{LRW1}.
\begin{theorem}
\label{GRW}
Let $\overline{\Lambda}_{n}$ be the probability measure on $\mathbb{R}^{d}$ corresponding to the probability 
vector $\frac{1}{n+1}\left(\mathbb{E}[U_{n,v}]\right)_{v \in \mathbb{Z}^{d}}$ and let
\[
\overline{\Lambda}_{n}^{cs}(A)
:= \overline{\Lambda}_{n}\left(\sqrt{\log n}A\varSigma^{-1/2}+ \bmu\log n\right),
\]
where $A$ is a Borel subset of $\Rbold^d$. 
Then, as $n \to \infty$,
\begin{equation}
\overline{\Lambda}_{n}^{cs}\Rightarrow \Phi_{d}.
\end{equation}
\end{theorem}

Recall that if $Z_n$ denotes the $\left(n+1\right)$-th selected color then
its probability mass function is given by 
$\left(\frac{\Ebold\left[U_{n,v}\right]}{n+1}\right)_{v \in {\mathbb Z}^d}$. Thus
$\overline{\Lambda}_{n}$ is the probability distribution of $Z_n$. So the following result holds trivially.
\begin{cor}
\label{Cor:GRW} 
Consider the urn model associated with the random walk $\{S_n\}_{n \geq 0}$ on $\Zbold^d \mbox{ } d \geq 1$, then 
as $n \rightarrow \infty$, 
\begin{eqnarray}\label{ED2}
\frac{Z_{n}-\bmu\log n} {\sqrt{\log n}} \Rightarrow N_{d}(0,\varSigma).
\end{eqnarray}
\end{cor}

The following result is an immediate application of the Theorem \ref{GRW}. 
\begin{cor}
Consider the urn model associated with the
simple symmetric random walk on $\mathbb{Z}^{d}, d\geq 1$. Then,
as $n\to \infty$,
\begin{eqnarray*}
 \frac{Z_{n}}{\sqrt{\log n}} \Rightarrow N_{d}(0,d^{-1}\mathbb{I}_{d}),
\end{eqnarray*}
where $\mathbb{I}_{d}$ is the $d \times d$ identity matrix. 
\end{cor}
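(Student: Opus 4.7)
The plan is to apply Theorem \ref{GRW} (equivalently, its restatement \eqref{ED2}) directly, after computing the mean vector $\bmu$ and covariance matrix $\varSigma$ for the single-step increment $X_1$ of the simple symmetric random walk. Since the hypothesis of Theorem \ref{GRW} is already in force for any bounded-increment walk with positive-definite $\varSigma$, and $B = \{v \in \Zbold^d : \|v\|_1 = 1\}$ is a finite set, the only real task is to identify the parameters.

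First I would compute $\bmu = \Ebold[X_1]$. By the symmetry of the law of $X_1$, which assigns mass $\frac{1}{2d}$ to each of the $2d$ vectors $\pm e_i$ (where $e_i$ is the $i^{\text{th}}$ standard basis row vector), for every coordinate $i$ the contributions of $+e_i$ and $-e_i$ cancel, so $\bmu = 0$.

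Next I would compute $\varSigma = \Ebold[X_1^T X_1]$. For the diagonal entries, $\sigma_{ii} = \Ebold[(X_1^{(i)})^2] = \frac{1}{2d}(1)^2 + \frac{1}{2d}(-1)^2 = \frac{1}{d}$. For off-diagonal entries with $i \neq j$, we have $\sigma_{ij} = \Ebold[X_1^{(i)} X_1^{(j)}] = 0$, because on each of the $2d$ atoms of the law of $X_1$ exactly one coordinate is nonzero so the product $X_1^{(i)} X_1^{(j)}$ vanishes identically. Hence $\varSigma = d^{-1} \Ibold_d$, which is positive definite, and $\varSigma^{1/2} = d^{-1/2} \Ibold_d$.

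Plugging these values into \eqref{ED2}, the centering $\bmu \log n$ vanishes, and the scaling matrix $\varSigma^{-1/2} = d^{1/2} \Ibold_d$ is absorbed into the Gaussian limit via the usual linear-transformation rule, giving
\[
\frac{Z_n}{\sqrt{\log n}} \cd N_d\bigl(0, d^{-1} \Ibold_d\bigr).
\]
There is essentially no obstacle here beyond the parameter computation; the whole content of the corollary is the specialization of the general central limit theorem to the SSRW. The only mild point worth noting is that the conclusion is independent of whether $d \leq 2$ (null recurrent case) or $d \geq 3$ (transient case), consistent with the remark in the introduction that the urn's limit law does not distinguish these two regimes.
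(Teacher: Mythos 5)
Your proof is correct and matches the paper's intended argument: the paper presents this corollary without explicit proof as an ``immediate application'' of Theorem \ref{GRW}, and the only content is precisely the parameter computation you carry out, namely $\bmu=0$ and $\varSigma=d^{-1}\mathbb{I}_d$, followed by substitution into \eqref{ED2}.
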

The above result essentially shows that
irrespective of the recurrent or transient behavior of the under lying random walk, the associated
urn models have similar asymptotic behavior. In particular, the limiting distribution
is always Gaussian with universal orders for centering and scaling, namely, $\OO\left(\log n\right)$ and 
$\OO\left(\sqrt{\log n}\right)$ respectively.

\subsection{Weak Convergence of the Random Configuration}
In this subsection we will present an asymptotic result for the random configuration of the urn.
Let $\mathcal{M}_{1}$ be the space of probability measures on $\mathbb{R}^{d},\mbox{ }d\geq 1$,
endowed with the topology of weak convergence. 
Let $\Lambda_{n} \in \mathcal{M}_{1}$ be the random probability measure corresponding to the 
random probability vector $\frac{U_{n}}{n+1}$. It is easy to see that the function 
$\Lambda_n : \Omega \rightarrow {\mathcal M}_1$ is measurable. 
\begin{theorem}
\label{ASd}
Let 
\[
\Lambda^{cs}_{n}\left(A\right)=\Lambda_{n}\left(\sqrt{\log n}A\varSigma^{-1/2} 
+  \bmu \log n\right).
\]
Then, as $n \to \infty $,
\begin{equation}
\label{Eq:PrCgs}
\Lambda_{n}^{cs}\stackrel{p}{\longrightarrow} \Phi_{d} \mbox{\ in\ }\mathcal{M}_{1}.
\end{equation}
\end{theorem}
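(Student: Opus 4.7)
The plan is to upgrade the mean-level convergence of Theorem~\ref{GRW} to convergence in probability by showing that the random measure $\Lambda_n^{cs}$ concentrates around its expectation $\overline{\Lambda}_n^{cs}$. Since $\mathcal{M}_1$ under the topology of weak convergence is a Polish space and the target $\Phi_d$ is deterministic, it suffices to prove that for each $\lambda$ in a countable dense subset of $\Rbold^d$ the random characteristic function
\[
\hat{\Lambda}_n^{cs}(\lambda) \;:=\; \int_{\Rbold^d} e^{i\langle\lambda,w\rangle}\, d\Lambda_n^{cs}(w)
\]
converges in probability to $e^{-\|\lambda\|^2/2}$. Upgrading such pointwise-in-$\lambda$ convergence to convergence in probability of the measures themselves is a pathwise application of L\'evy's continuity theorem (from any subsequence extract a further subsubsequence along which $\hat{\Lambda}_n^{cs}(\lambda)$ converges a.s.\ for every rational $\lambda$ via a diagonal argument; tightness is automatic because the limit is a single probability measure). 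Since Theorem~\ref{GRW} already gives $\Ebold[\hat{\Lambda}_n^{cs}(\lambda)] = \hat{\overline{\Lambda}}_n^{cs}(\lambda) \to e^{-\|\lambda\|^2/2}$, the task reduces to proving $\mathrm{Var}\bigl(\hat{\Lambda}_n^{cs}(\lambda)\bigr) \to 0$ for every such $\lambda$.

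The key algebraic observation is that characters diagonalize the replacement matrix: for every $\mu \in \Rbold^d$,
\[
\sum_{v \in \Zbold^d} R(u,v)\, e^{i\langle\mu,v\rangle} \;=\; \hat{p}(\mu)\, e^{i\langle\mu,u\rangle}, \qquad \hat{p}(\mu):=\Ebold\bigl[e^{i\langle\mu,X_1\rangle}\bigr].
\]
Setting $\hat{U}_n(\mu) := \sum_v U_{n,v}\, e^{i\langle\mu,v\rangle}$ and $\mu_n:=\lambda\varSigma^{1/2}/\sqrt{\log n}$, a change of variables gives $\hat{\Lambda}_n^{cs}(\lambda) = (n+1)^{-1} e^{-i\sqrt{\log n}\langle\lambda\varSigma^{1/2},\bmu\rangle}\,\hat{U}_n(\mu_n)$. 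The urn recursion $U_{n+1}=U_n+R_{Y_n}$ combined with $\Ebold\bigl[e^{i\langle\mu,Y_n\rangle}\,\big|\,\mathcal{F}_n\bigr] = \hat{U}_n(\mu)/(n+1)$ yields
\[
\Ebold\bigl[\,|\hat{U}_{n+1}(\mu)|^2\,\big|\,\mathcal{F}_n\bigr] \;=\; |\hat{U}_n(\mu)|^2 \Bigl( 1 + \tfrac{2\,\Re\hat{p}(\mu)}{n+1} \Bigr) + |\hat{p}(\mu)|^2,
\]
a linear recurrence for $Q_n(\mu) := \Ebold\bigl[|\hat{U}_n(\mu)|^2\bigr]$ whose closed form is a weighted combination of the products $\prod_{k=1}^{n}\bigl(1 + 2\Re\hat{p}(\mu)/k\bigr)$.

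To establish vanishing variance, I would evaluate at $\mu=\mu_n$. The Taylor expansion $\hat{p}(\mu_n) = 1 + i\langle\mu_n,\bmu\rangle - \tfrac{1}{2}\,\mu_n\varSigma\mu_n^T + o(1/\log n)$, together with the sharp asymptotic $\prod_{k=1}^n(1+a/k) = n^a/\Gamma(1+a)\cdot(1+o(1))$ supplied by the paper's appendix lemma, shows that both $Q_n(\mu_n)/(n+1)^2$ and $|\Ebold[\hat{U}_n(\mu_n)]/(n+1)|^2$ converge to the same limit, namely the modulus squared of the Gaussian characteristic function appearing in Theorem~\ref{GRW}. Subtracting gives $\mathrm{Var}\bigl(\hat{\Lambda}_n^{cs}(\lambda)\bigr) \to 0$, and the reduction of the first paragraph closes the argument.

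The main obstacle is precisely this asymptotic matching. Because $\mu_n$ vanishes only at rate $1/\sqrt{\log n}$, the $O(1/\log n)$ correction in $\hat{p}(\mu_n)$ is amplified by the $\log n$ effective length of the product and contributes a nontrivial exponential factor $e^{-\mathrm{const}\cdot\|\lambda\|^2}$ rather than just an $1+o(1)$ term. One must verify that the factor produced by $\prod_{k=1}^n\bigl(1 + 2\Re\hat{p}(\mu_n)/k\bigr)$ cancels exactly against the square of the modulus of $\prod_{k=1}^n\bigl(1 + \hat{p}(\mu_n)/k\bigr)$, and analogously for the inhomogeneous term in the second-moment recursion; any failure of this second-order cancellation would leave a nonzero residual variance. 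This is exactly the Gamma-function product asymptotic the appendix lemma is set up to deliver.
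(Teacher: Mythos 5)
Your proposal is correct in outline and runs parallel to the paper's argument, but it substitutes the Fourier transform for the paper's Laplace transform, and this changes several technical details worth flagging. The paper works with the moment generating function: it defines $\overline{M}_{n}(\lambda)=U_{n}x(\lambda)/\Pi_{n}(e(\lambda))$ with $x(\lambda)=(e^{\langle\lambda,v\rangle})_{v}$ and real $\lambda$, observes that this is a \emph{non-negative martingale}, establishes a uniform $L^{2}$ bound (Theorem \ref{Martingale}) on a neighborhood of $0$, and then proves $\overline{M}_{n}(\lambda/\sqrt{\log n})\stackrel{p}{\to}1$ (Lemma \ref{PC}) by showing the second moment tends to $1$. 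The subsequence--diagonal argument you describe is exactly the paper's reduction, and the tightness step is carried out via the MGF-based Markov inequality in the appendix (Theorem \ref{Rational2}). Your characteristic-function version trades the martingale structure for one genuine simplification: since $\lvert e^{i\langle\mu,Y_n\rangle}\rvert=1$, the quadratic-variation term in your recursion for $Q_n(\mu)=\Ebold[\lvert\hat U_n(\mu)\rvert^2]$ is the constant $\lvert\hat p(\mu)\rvert^2$, whereas the paper's second-moment recursion \eqref{2M} carries the awkward factor $\Ebold[U_n x(2\lambda)]$ and so has to juggle $e(\lambda)$ against $e(2\lambda)$ throughout Theorem \ref{Martingale}. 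On the other hand, two of your claims are stated a bit too loosely. First, ``tightness is automatic'' is not quite right: with only a.s.\ pointwise convergence of characteristic functions on rationals you still need the L\'evy inversion bound $\nu(\|x\|>2/u)\le u^{-1}\int_{-u}^{u}(1-\Re\hat\nu(t))\,dt$ plus dominated convergence to get tightness along the subsequence, which plays the role of the Chernoff bound in Theorem \ref{Rational2}. Second, ``cancels exactly'' is an overstatement: by the Euler product asymptotic, $\Pi_n(2\Re\hat p(\mu_n))/\lvert\Pi_n(\hat p(\mu_n))\rvert^{2}\to\lvert\Gamma(2)\rvert^{2}/\Gamma(3)=\tfrac12$, not $1$, and the remaining $\tfrac12$ comes from the inhomogeneous sum --- precisely the computation in the paper's Lemma \ref{PC}, where the two halves add to give $\Ebold[\overline M_n^2(\lambda_n)]\to 1$. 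You would also need a uniform bound (the analogue of Theorem \ref{Martingale}) to justify the limit--sum interchange in that inhomogeneous term, a step your sketch omits. With those repairs, the Fourier route closes; it is a legitimate and slightly leaner alternative to the paper's MGF-martingale approach, at the cost of giving up the positive-martingale structure that makes the paper's $L^2$ estimates routine.
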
 
We note that the Theorem \ref{ASd} is a stronger version of the Theorem \ref{GRW}. 

\subsection{Local Limit Theorem Type Results for the Expected Configuration}
It turns out that under certain assumptions the expected configuration of the urn at time $n$, namely, 
$\left(\frac{\Ebold\left[U_n\right]}{n+1}\right)_{n \geq 0}$ satisfies a \emph{local limit theorem}. 

\subsubsection{Local Limit Type Results for One Dimension}
\label{SubSubSec:LLT1}
In this subsection, we present the local limit theorems for urns with colors indexed by $\mathbb{Z}$.
Note that $X_1$ is a lattice random variable, so we can write 
\begin{equation}
\mathbb{P}\left(X_{1} \in a+ h\mathbb{Z}\right)=1,
\label{Equ:Span}
\end{equation}
where $a \in \mathbb{R}$ and $h>0$ is maximum value such that \eqref{Equ:Span} holds. $h$ is called the
span for $X_1$ (see Section 3.5 of \cite{Durr10}).
We define 
\begin{equation}
\mathcal{L}_{n}^{(1)} :=
\left\{x\colon x=\frac{n}{\sigma\sqrt{\log n}}a-\frac{\mu}{\sigma} \sqrt{\log n}+\frac{h}{\sigma \sqrt{\log n}}
z, \,\, z \in \mathbb{Z}\right\}.
\label{Equ:Def-L^1}
\end{equation}

\begin{theorem}\label{llt1}Assume that
$\mathbb{P}\left[X_{1}=0\right]>0$. Then, as $n \to \infty$
\begin{equation}
\sup_{x \in \mathcal{L}_{n}^{(1)}} \left\vert \sigma \frac{\sqrt{ \log n}}{h}\mathbb{P}\left(\frac{Z_{n}-
\mu \log n}{\sigma \sqrt{\log n}}=x\right)-\phi(x) \right\vert \longrightarrow 0.
\end{equation}
\end{theorem}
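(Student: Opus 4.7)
The plan is Fourier inversion, based on an explicit product formula for the characteristic function $\varphi_n(\theta) := \mathbb{E}[e^{i\theta Z_n}]$ of the $(n+1)$-st selected color. Since $\mathbb{P}(X_1 = 0) > 0$, we may take $a = 0$ in \eqref{Equ:Span}, so that $X_1 \in h\mathbb{Z}$ and the support of $Z_n$ lies in $h\mathbb{Z}$; the lattice $\mathcal{L}_n^{(1)}$ is then precisely the rescaled support that appears in the statement.

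Writing $\lambda_n(v) = \mathbb{E}[U_{n,v}]/(n+1)$ for the PMF of $Z_n$, the recursion \eqref{recurssion} together with $R(u,v) = p(v-u)$ gives $\lambda_{n+1} = \tfrac{n+1}{n+2}\lambda_n + \tfrac{1}{n+2}(\lambda_n * p)$, which after Fourier transformation becomes
$$\varphi_{n+1}(\theta) = \frac{n+1+\psi(\theta)}{n+2}\,\varphi_n(\theta), \qquad \varphi_n(\theta) = \varphi_0(\theta)\prod_{k=0}^{n-1}\frac{k+1+\psi(\theta)}{k+2},$$
where $\psi(\theta) = \mathbb{E}[e^{i\theta X_1}]$. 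I anticipate that this identity (or an equivalent representation of $Z_n$ as a random walk subordinated to the depth of a uniform node in a random recursive tree) is one of the intermediate results in Section \ref{SubSec:Intermediate}. Combined with the lattice Fourier inversion formula and the substitutions $\theta = s/(\sigma\sqrt{\log n})$, $y = \mu\log n + x\sigma\sqrt{\log n}$, the theorem reduces to
$$\sup_{x \in \mathcal{L}_n^{(1)}}\bigg|\frac{1}{2\pi}\int_{|s|\le \pi\sigma\sqrt{\log n}/h} e^{-isx}\,g_n(s)\,ds - \phi(x)\bigg| \longrightarrow 0,$$
with $g_n(s) := e^{-is\mu\sqrt{\log n}/\sigma}\,\varphi_n\bigl(s/(\sigma\sqrt{\log n})\bigr)$. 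Pointwise convergence $g_n(s) \to e^{-s^2/2}$ is immediate: expand $\log\varphi_n(\theta) = \sum_{k}\log\bigl(1 + (\psi(\theta)-1)/(k+2)\bigr) = (\psi(\theta)-1)\log n + O(1)$ and insert the Taylor expansion of $\psi$, which is essentially the calculation behind Theorem \ref{GRW}. Since $|e^{-isx}|=1$, uniformity in $x$ will be automatic once the integral itself is controlled.

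I split the $s$-integral into three pieces. On $|s| \le A$, dominated convergence yields the desired convergence to the corresponding truncation of $\phi(x) = \tfrac{1}{2\pi}\int e^{-isx}e^{-s^2/2}\,ds$. On $A \le |s| \le \delta\sqrt{\log n}$, Taylor-expanding $\psi$ and summing produces a uniform Gaussian-type bound $|g_n(s)| \le e^{-c s^2}$ for sufficiently small $\delta$ and some $c>0$, whose contribution vanishes as $A \to \infty$. The main obstacle is the high-frequency piece $\delta\sqrt{\log n} \le |s| \le \pi\sigma\sqrt{\log n}/h$: here I exploit that $h$ is exactly the span of $X_1$, which guarantees
$$\rho := \sup_{\delta \le |\theta| \le \pi/h}|\psi(\theta)| < 1.$$
Combined with the elementary inequality $|k+1+\psi(\theta)| \le k+1+|\psi(\theta)|$, this yields
$$|\varphi_n(\theta)| \;\le\; |\varphi_0(\theta)|\prod_{k=0}^{n-1}\frac{k+1+\rho}{k+2} \;=\; O\!\bigl(n^{-(1-\rho)}\bigr)$$
uniformly in this region; since its length is $O(\sqrt{\log n})$, its total contribution is $O\bigl(\sqrt{\log n}\,n^{-(1-\rho)}\bigr) \to 0$. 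This clean polynomial-in-$n$ decay of $|\varphi_n|$ in the bulk Fourier regime, falling out directly from the product formula, is the essential new ingredient beyond the standard LLT argument; combining the three bounds completes the proof.
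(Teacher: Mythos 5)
Your proposal is correct and follows essentially the same route as the paper's proof. The paper derives the product formula for the characteristic function through Theorem~\ref{LRW1}, which gives the representation $Z_n \stackrel{d}{=} Z_0 + \sum_{j=1}^n I_j X_j$ with $I_j \sim \mathrm{Bernoulli}(1/(j+1))$; this is exactly your ``depth of a uniform node in a random recursive tree'' identity, and taking characteristic functions of that sum reproduces your product $\prod_{j}(1-\tfrac{1}{j+1}+\tfrac{\psi(\theta)}{j+1}) = \tfrac{1}{n+1}\Pi_n(\psi(\theta))$. The three-region split (bounded $|s|$ via dominated convergence, mid range via $|\psi(\theta)|\le 1-\tfrac{t^2}{4}$ giving a Gaussian envelope, high range via the span condition giving $|\psi|\le\rho<1$ and hence $|\varphi_n|=O(n^{-(1-\rho)})$) is precisely the paper's decomposition into $\int_{-M}^{M}$, $\mathcal{I}_1(n)$, and $\mathcal{I}_2(n)$ after the change of variable $w = t/\sqrt{\log n}$, and your observation that the assumption $\mathbb{P}(X_1=0)>0$ forces the span of $I_jX_j$ to coincide with that of $X_j$ is exactly the role that hypothesis plays in the paper.
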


The above local limit theorem does not cover all cases. 
The next theorem is for the special case when the urn is associated with the
simple symmetric random walk which is not covered by Theorem \ref{llt1} or its generalization 
given in Section \ref{Sec:Proofs}.
\begin{theorem}
\label{llt4}
Assume that $\Pbold\left(X_1 = 1 \right) = \Pbold\left(X_1 = -1\right) = \frac{1}{2}$. Then, as $n \to \infty$
\begin{equation}
\sup_{x \in \mathcal{L}_{n}^{(1)}} \left\vert \sqrt{ \log n}
\mathbb{P}\left(\frac{Z_{n}}{\sqrt{\log n}}=x\right)-\phi(x) \right\vert \longrightarrow 0 
\end{equation} 
where 
$\mathcal{L}_{n}^{(1)}$ is given by \eqref{Equ:Def-L^1} with $\mu=0=a$ and $\sigma = 1 = h$. 
\end{theorem}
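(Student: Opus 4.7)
The plan is to carry out a direct Fourier-inversion argument, since for the SSRW and $U_0$ of finite integer support, $Z_n$ is integer-valued, so $\mathcal{L}_{n}^{(1)}$ is exactly the support $\Zbold/\sqrt{\log n}$ of $Z_n/\sqrt{\log n}$. First I would derive an explicit formula for the characteristic function of $Z_n$. Starting from the recursion $\Ebold[U_{n+1}] = \Ebold[U_n](I + R/(n+1))$ and noting that the Fourier symbol of $R$ for the SSRW on $\Zbold$ is $\cos\theta$, one obtains
\[
\hat{\mu}_n(\theta) := \Ebold\bigl[e^{i\theta Z_n}\bigr] = \frac{\hat{U}_0(\theta)}{n+1}\prod_{k=0}^{n-1}\!\left(1 + \frac{\cos\theta}{k+1}\right) = \frac{\hat{U}_0(\theta)\,\Gamma(n+1+\cos\theta)}{\Gamma(1+\cos\theta)\,(n+1)!},
\]
where $\hat{U}_0(\theta) := \sum_{v} U_{0,v}\,e^{i\theta v}$ is a bounded trigonometric polynomial. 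By Stirling's formula, $\hat{\mu}_n(\theta)\sim \hat{U}_0(\theta)\,n^{\cos\theta-1}/\Gamma(1+\cos\theta)$; in particular $\hat{\mu}_n(\pm\pi)=0$ because $\Gamma(0)=\infty$.

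Next, Fourier inversion on $\Zbold$ together with the change of variables $\theta = t/\sqrt{\log n}$, $x = k/\sqrt{\log n}$ gives
\[
\sqrt{\log n}\,\Pbold(Z_n = k) = \frac{1}{2\pi}\int_{-\pi\sqrt{\log n}}^{\pi\sqrt{\log n}} e^{-ixt}\,\hat{\mu}_n\!\left(\frac{t}{\sqrt{\log n}}\right)dt,
\]
which I would compare with $\phi(x) = \frac{1}{2\pi}\int_{\Rbold} e^{-ixt}\,e^{-t^2/2}\,dt$. The goal is to show the difference tends to $0$ uniformly in $x\in\mathcal{L}_{n}^{(1)}$. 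I would split the integration region into four parts: (a) a fixed window $|t|\le T$, where a Taylor expansion $\cos(t/\sqrt{\log n})-1 = -t^2/(2\log n) + O(t^4/\log^2 n)$ gives $\hat{\mu}_n(t/\sqrt{\log n}) \to e^{-t^2/2}$ uniformly, producing the Gaussian piece; (b) an intermediate range $T\le|t|\le \delta\sqrt{\log n}$, where the inequality $1-\cos\theta\ge c\theta^2$ for small $\theta$ yields a Gaussian-type bound $|\hat{\mu}_n(t/\sqrt{\log n})|\le C e^{-c t^2}$ whose integral is $o_T(1)$; (c) a "middle" range $\delta\sqrt{\log n}\le|t|\le(\pi-\eta)\sqrt{\log n}$, where $1-\cos\theta\ge c>0$ gives $|\hat{\mu}_n|\le Cn^{-c}$ and a contribution of $O(n^{-c}\sqrt{\log n})$; (d) a neighborhood $(\pi-\eta)\sqrt{\log n}\le|t|\le\pi\sqrt{\log n}$ of the boundary.

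The main obstacle is region (d): the SSRW falls outside the hypothesis of Theorem \ref{llt1} precisely because $|\cos\theta|$ also attains $1$ at $\theta=\pm\pi$, so the condition $\Pbold(X_1=0)>0$ fails. The remedy is to exploit the exact Gamma-function form of $\hat{\mu}_n$: for $\theta=\pi+\delta$ with $|\delta|$ small,
\[
\cos\theta = -1 + \delta^2/2 + O(\delta^4),\qquad \Gamma(1+\cos\theta)=\Gamma\!\left(\delta^2/2+O(\delta^4)\right)\sim 2/\delta^2,
\]
so that $|\hat{\mu}_n(\theta)|\le C\,\delta^2\,n^{-2+\delta^2/2+O(\delta^4)}$. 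Translating back to the $t$-variable via $\delta=(t\mp\pi\sqrt{\log n})/\sqrt{\log n}$ and integrating, region (d) contributes $O(n^{-c}\sqrt{\log n})=o(1)$ for any small enough $\eta$. The complementary Gaussian tail $\int_{|t|>\pi\sqrt{\log n}} e^{-t^2/2}\,dt$ is negligible. Combining the four regional estimates yields the uniform convergence asserted by the theorem.
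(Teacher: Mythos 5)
Your proof is correct and follows the same overall Fourier-inversion-and-region-decomposition strategy as the paper's. The decomposition into a fixed window, an intermediate Gaussian-type range, a middle range with $|\cos\theta|<1$, and a neighborhood of $\theta=\pm\pi$ is exactly the paper's $[-M,M]$, $\mathcal{I}_1$, $\mathcal{J}_1$, $\mathcal{J}_2$ split, and the key obstruction you identify (that $|\cos\theta|$ also attains $1$ at $\theta=\pm\pi$, so the standard bound of Theorem~\ref{llt1} fails there) is precisely what the theorem is designed to overcome.

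Where you diverge is in the treatment of the boundary region. You invoke the exact Gamma-function representation $\hat\mu_n(\theta)=\hat U_0(\theta)\Gamma(n+1+\cos\theta)/\left(\Gamma(1+\cos\theta)(n+1)!\right)$, expand $\cos\theta$ near $-1$, and combine the blow-up $\Gamma(1+\cos\theta)\sim 2/\delta^2$ with uniform Stirling asymptotics $\Gamma(n+1+\cos\theta)/(n+1)!\sim n^{\cos\theta-1}$ to get $|\hat\mu_n(\theta)|\lesssim\delta^2 n^{-2+\delta^2/2}$. This is correct but more machinery than needed, and it tacitly requires the Stirling ratio to be uniform for $\cos\theta\in[-1,0]$ (which it is, but would need to be stated). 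The paper instead observes the elementary fact that once $w\in[\pi-\eta,\pi]$ one has $\cos w\in[-1,\cos(\pi-\eta)]\subset[-1,0)$, so every factor $1+\cos w/j$ lies in $[0,1]$, whence $\Pi_n(\cos w)\le 1$ and $|\psi_n(w\sqrt{\log n})|\le 1/(n+1)$ pointwise. That single inequality immediately gives $\mathcal{J}_2(n)\le\eta\sqrt{\log n}/(n+1)\to 0$, bypassing Stirling and the $O(\delta^4)$ error bookkeeping entirely. Both routes reach the same conclusion; the paper's is the cleaner and more self-contained, while yours extracts sharper (but unused) decay from the Gamma representation.
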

The following result is immediate from the above theorem.
\begin{cor}
\label{Cor:1d-0-color-LLT}
Assume that $\Pbold\left(X_1 = 1 \right) = \Pbold\left(X_1 = -1\right) = \frac{1}{2}$. Then, as $n \to \infty$
\begin{equation}
\mathbb{P}\left(Z_{n}=0\right) \sim \frac{1}{\sqrt{2 \pi \log n}}.
\end{equation} 
\end{cor}

\subsubsection{Local Limit Type Results for Higher Dimensions}
Now we consider the case $d \geq 2$. Note that 
$X_1$ is then a lattice random vector taking values in $\Zbold^d$. 
Let $\mathcal{L}$ be its \emph{minimal lattice}, that is, $\Pbold\left(X_1 \in x + \LL\right) = 1$ for 
every $x \in \Zbold^d$ such that $\Pbold\left(X_1 = x \right) > 0$ and if $\LL'$ is any closed subgroup
of $\Rbold^d$,
such that $\Pbold\left(X_1 \in y + \LL'\right) = 1$ for some $y \in \Zbold^d$, then $\LL \subseteq \LL'$
and the rank of $\LL$ is $d$. 
We refer to the pages 226 -- 227 of \cite{BhRa76} for formal definitions of the minimal lattice of a 
$d$-dimensional lattice random variable and its rank.
Let $l = \det\left(\LL\right)$ (see the pages 228 -- 229 of \cite{BhRa76} for more details).
Now let $x_0$ be such that $\Pbold\left(X_1 \in x_0 + \LL\right) = 1$ and we define
\begin{equation}
\mathcal{L}_{n}^{(d)} :=
\left\{ x\colon x = \frac{n}{\sqrt{\log n}} x_{0}\varSigma ^{-1/2}-\sqrt{\log n} \, 
\bmu \,\varSigma^{-1/2}+\frac{1}{\sqrt{\log n}} z \varSigma ^{-1/2}, \,\, z \in {\mathcal L} \right\}.
\label{Equ:Def-L^d}
\end{equation}

\begin{theorem}\label{llt2}Assume that
$\mathbb{P}\left[X_{1}=0\right]>0$. Then, as $n \to \infty$
\begin{equation}
\sup_{x \in \mathcal{L}_{n}^{(d)}} \left\vert 
\frac{\text{det}(\varSigma^{1/2})\left(\sqrt{\log n} \right)^{d}}{l}
\mathbb{P}\left(\frac{Z_{n}-\bmu\log n}{\sqrt{\log n}}\varSigma ^{-1/2}=x\right)-
\phi_{d}(x) \right\vert \longrightarrow 0. 
\end{equation} 
\end{theorem}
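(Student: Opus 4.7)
The plan is to invert the characteristic function of $Z_n$ on the dual lattice of $\LL$. Starting from the linear recursion $\Ebold[U_{n+1}] = \Ebold[U_n](I + R/(n+1))$, iteration yields the distributional identity $Z_n \stackrel{d}{=} S_{J_n}$, where $J_n := \sum_{k=1}^n B_k$ with $B_k \sim \mathrm{Bern}(1/(k+1))$ mutually independent and independent of $\{S_j\}$; this should be one of the ``intermediate results'' promised in Section~\ref{SubSec:Intermediate}. Writing $\psi(t) := \Ebold[e^{i\langle t, X_1\rangle}]$, independence gives
\[
\hat q_n(t) := \Ebold[e^{i\langle t, Z_n\rangle}] = \prod_{k=1}^n \left(1 + \frac{\psi(t)-1}{k+1}\right).
\]
Since $\Pbold(X_1 = 0) > 0$ forces $x_0 \in \LL$, the support of $Z_n$ lies in $\LL$, and the lattice Fourier inversion formula gives
\[
\Pbold(Z_n = v) = \frac{l}{(2\pi)^d} \int_{D^*} e^{-i\langle t, v\rangle}\,\hat q_n(t)\,dt, \qquad v \in \LL,
\]
where $D^*$ is a fundamental domain for the dual lattice of $\LL$.

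Next, substituting $v = \bmu\log n + \sqrt{\log n}\, x\, \varSigma^{1/2}$ (so $x \in \mathcal{L}_n^{(d)}$) and changing variables $t = s\,\varSigma^{-1/2}/\sqrt{\log n}$, the Jacobian produces precisely the prefactor $l/[(2\pi)^d (\log n)^{d/2}\det(\varSigma^{1/2})]$ of the statement, while the phase splits as $\langle t, v\rangle = \sqrt{\log n}\, s\, \varSigma^{-1/2}\bmu^T + s\, x^T$. Combining the Taylor expansion $\psi(t) - 1 = i\langle t, \bmu\rangle - \tfrac{1}{2} t\,\varSigma\, t^T + o(\|t\|^2)$ with $\log(1+w) = w + O(|w|^2)$ and $\sum_{k=1}^n 1/(k+1) = \log n + O(1)$ yields, uniformly on bounded $s$-sets,
\[
\hat q_n\!\left(s\,\varSigma^{-1/2}/\sqrt{\log n}\right) = \exp\!\bigl(i\sqrt{\log n}\, s\,\varSigma^{-1/2}\bmu^T - \tfrac{1}{2}\|s\|^2\bigr)\bigl(1+o(1)\bigr).
\]
The $\sqrt{\log n}$-oscillatory factor exactly cancels the corresponding factor in $e^{-i\langle t, v\rangle}$, so the rescaled integrand converges to $e^{-i s x^T - \|s\|^2/2}$, whose $\Rbold^d$-integral is $(2\pi)^d \phi_d(x)$. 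This delivers the pointwise limit, and uniformity in $x$ is automatic because $x$ enters only through a factor of modulus one.

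The main obstacle is controlling the $t$-integral away from the origin, and this is precisely where $\Pbold(X_1=0) > 0$ is crucial. The bound $\mathrm{Re}\,\psi(t) \geq 2\Pbold(X_1=0) - 1 > -1$ keeps each factor $1 + (\psi(t)-1)/(k+1)$ bounded away from $0$, justifying $\log|1+w| = \mathrm{Re}\,w + O(|w|^2)$ termwise and giving $|\hat q_n(t)| \asymp n^{\mathrm{Re}\,\psi(t) - 1}$. Minimality of $\LL$ implies $\mathrm{Re}\,\psi(t) < 1$ on $D^* \setminus \{0\}$, and compactness of $D^*$ then yields $\mathrm{Re}\,\psi(t) \leq 1-\delta$ on $D^* \setminus B_\varepsilon(0)$, so that region contributes $O(n^{-\delta})$---negligible against the $(\log n)^{-d/2}$ main term. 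In the intermediate regime the quadratic lower bound $1 - \mathrm{Re}\,\psi(t) \geq c\|t\|^2$ near $0$ produces Gaussian-tail decay of $|\hat q_n(t)|$ in the $s$-variable, supplying the integrable majorant needed to run a dominated-convergence argument that upgrades pointwise to uniform-in-$x$ convergence on $\mathcal{L}_n^{(d)}$.
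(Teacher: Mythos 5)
Your proposal is correct and follows essentially the same route as the paper: both use the distributional identity $Z_n \stackrel{d}{=} Z_0 + \sum_{j=1}^n I_j X_j$, the lattice Fourier inversion formula, and a three-way split of the inversion integral handled by bounded convergence near the origin, the quadratic bound $1-\mathrm{Re}\,\psi(t) \geq c\|t\|^2$ (paper: $|e(it)| \leq 1 - b\|t\|^2/2$) in the intermediate regime, and a uniform contraction $|\psi(t)| \leq \eta < 1$ away from the origin. The only differences are cosmetic: you carry $\bmu$ and $\varSigma$ through explicitly and cancel the oscillatory phase directly, whereas the paper reduces to $\bmu = 0$, $\varSigma = \mathbb{I}_d$ at the outset; and your two-sided estimate $|\hat q_n(t)| \asymp n^{\mathrm{Re}\,\psi(t)-1}$ is a slight refinement of the one-sided bound the paper actually needs.
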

Observe that as in the one dimensional case the above theorem does not cover all the cases.  
The next theorem is for the special case when the urn is associated with the
simple symmetric random walk on $\Zbold^d, \mbox{ } d \geq 2,$ which is not covered by Theorem \ref{llt2}. 
\begin{theorem}\label{llt5}
Assume that $\Pbold\left(X_1 = \pm e_i\right) = \frac{1}{2d}$ for $1 \leq i \leq d$, where $e_i$ is the
$i$-th unit vector in direction $i$. Then, as $n \rightarrow \infty$ 
\begin{eqnarray}\label{lltSSRW}
\sup_{x \in \mathcal{L}_{n}^{(d)}} \left\vert \left(d\right)^{\frac{d}{2}} \left(\sqrt{\log n} \right)^{d}
\mathbb{P}\left(\frac{\sqrt{d}}{\sqrt{\log n}} Z_n =x\right)-
\phi_{d}(x) \right\vert \longrightarrow 0,
\end{eqnarray} 
where $\mathcal{L}_{n}^{(d)}$ is as defined in \eqref{Equ:Def-L^d} with $\mu = 0 = x_0$, $\varSigma = \Ibold_d$
and $\LL = \sqrt{d} \, \Zbold^d$. 
\end{theorem}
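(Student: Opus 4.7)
The plan is to adapt the Fourier-inversion strategy underlying the proof of Theorem~\ref{llt2}, addressing an extra ``critical frequency'' at $\lambda = \pi\mathbf{1}$ that appears because the simple symmetric random walk has period~$2$. The starting point is the closed-form characteristic function obtained by taking the Fourier transform of the recursion $\mathbb{E}[U_{n+1}] = \mathbb{E}[U_n](I + R/(n+1))$:
\[
\hat{\Lambda}_n(\lambda) := \mathbb{E}\!\left[e^{i\langle \lambda, Z_n\rangle}\right] = \frac{1}{n+1}\prod_{k=1}^n\!\left(1 + \frac{\hat{p}(\lambda)}{k}\right) = \frac{\Gamma\!\left(n+1+\hat{p}(\lambda)\right)}{(n+1)!\,\Gamma\!\left(1+\hat{p}(\lambda)\right)},
\]
where $\hat{p}(\lambda) = d^{-1}\sum_{j=1}^{d}\cos\lambda_j$. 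Since $Z_n$ takes values in $\mathbb{Z}^d$, Fourier inversion yields $\mathbb{P}(Z_n = v) = (2\pi)^{-d}\int_{[-\pi,\pi]^d}\hat{\Lambda}_n(\lambda)\,e^{-i\langle\lambda,v\rangle}\,d\lambda$. Plugging in $v = x\sqrt{\log n/d}$ and rescaling $\lambda = \eta\sqrt{d/\log n}$ reduces the theorem to analyzing the limit of
\[
\int_{D_n}\hat{\Lambda}_n\bigl(\eta\sqrt{d/\log n}\bigr)\,e^{-i\langle\eta,x\rangle}\,d\eta,\qquad D_n := \bigl[-\pi\sqrt{\log n/d},\pi\sqrt{\log n/d}\bigr]^{d},
\]
and identifying it, together with the scalar factor from the change of variables, with a multiple of $\phi_d(x)$.

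For $\eta$ in a bounded region I would use the Taylor expansion $\hat{p}(\lambda) = 1 - \|\lambda\|^2/(2d) + O(\|\lambda\|^4)$ near $0$, combined with the Stirling asymptotic $\Gamma(n+1+s)/\Gamma(n+2) \sim (n+1)^{s-1}$, to obtain $\hat{\Lambda}_n(\eta\sqrt{d/\log n}) \to e^{-\|\eta\|^2/2}$ pointwise. The quadratic bound $\hat{p}(\lambda) \le 1-c\|\lambda\|^2$ on a neighborhood of $0$ supplies a Gaussian-type dominating function, so dominated convergence delivers the Fourier inverse of $e^{-\|\eta\|^2/2}$ as the main term. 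The main obstacle, and the essential difference from Theorem~\ref{llt2}, is controlling the integral near the secondary critical point $\lambda = \pi\mathbf{1}$: the assumption $\mathbb{P}(X_1 = 0) > 0$ of Theorem~\ref{llt2} fails for the simple symmetric walk, and $|\hat{p}|$ again attains $1$ there because $\hat{p}(\pi\mathbf{1}) = -1$.

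Writing $\lambda = \pi\mathbf{1} + \mu$ and using the identity $\hat{p}(\pi\mathbf{1}+\mu) = -\hat{p}(\mu)$, the $k=1$ factor vanishes quadratically:
\[
1 + \hat{p}(\pi\mathbf{1}+\mu) = 1 - \hat{p}(\mu) = \frac{\|\mu\|^2}{2d} + O(\|\mu\|^4).
\]
Combined with the Stirling-type estimate $\prod_{k=2}^{n}\bigl(1-\hat{p}(\mu)/k\bigr) = \Gamma(n+1-\hat{p}(\mu))/\bigl(n!\,\Gamma(2-\hat{p}(\mu))\bigr) \sim n^{-\hat{p}(\mu)}$ and the $(n+1)^{-1}$ prefactor, this yields the bound $|\hat{\Lambda}_n(\pi\mathbf{1}+\mu)| = O\bigl(\|\mu\|^2\, n^{-1-\hat{p}(\mu)}\bigr)$ on $\{\|\mu\| \le \delta'\}$. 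Rescaling $\mu = \nu\sqrt{d/\log n}$, the total contribution from this neighborhood is of order $n^{-2+\delta'^2/(2d)}\cdot\mathrm{poly}(\log n)$, which is $o\bigl((\log n)^{-d/2}\bigr)$ once $\delta'$ is chosen sufficiently small (say, $\delta' < 2\sqrt{d}$); without the quadratic vanishing of the $k=1$ factor, the exponential blow-up $e^{\|\mu\|^2\log n/(2d)}$ inherited from $n^{-\hat{p}(\mu)}$ would not be absorbed, so this cancellation is the crux of the argument. On the remaining ``bulk'' region of $[-\pi,\pi]^d$ bounded away from $0$ and $\pi\mathbf{1}$, compactness yields $|\hat{p}(\lambda)| \le 1-\delta$ for some $\delta > 0$, whence $|\hat{\Lambda}_n(\lambda)| \le (n+1)^{-1}\prod_{k=1}^{n}\bigl(1 + (1-\delta)/k\bigr) = O(n^{-\delta})$, and its integrated contribution is negligible. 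Uniformity over $x \in \mathcal{L}_n^{(d)}$ follows because the bounds in each region are uniform in the oscillatory factor $|e^{-i\langle\eta,x\rangle}|=1$, and for $\|x\|$ large both sides of the desired asymptotic decay to zero by the Gaussian decay of $\phi_d$ together with a Riemann--Lebesgue estimate on the rescaled integrand.
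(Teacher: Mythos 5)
Your overall strategy — Fourier inversion on the fundamental domain, a central region handled by the CLT of Theorem~\ref{GRW} plus dominated convergence, a bulk region with exponential decay, and separate care near secondary critical frequencies — matches the paper's proof. The genuine issue is in your handling of the corners of $[-\pi,\pi]^d$. You treat only the single corner $\pi\mathbf{1}$, but $|\hat{p}(\lambda)|=1$ at all $2^d$ points of $\{-\pi,\pi\}^d$ (they are one point on the torus but all contribute to the integral over $[-\pi,\pi]^d$). Consequently the claim that compactness gives $|\hat{p}(\lambda)|\le 1-\delta$ on the complement of neighborhoods of $0$ and $\pi\mathbf{1}$ alone is false: near the other corners $|\hat{p}|$ still approaches $1$, and the asserted $O(n^{-\delta})$ bulk bound breaks down there. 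The fix is routine — $\hat{p}(c+\mu)=-\hat{p}(\mu)$ for every corner $c\in\{-\pi,\pi\}^d$, so your analysis at $\pi\mathbf{1}$ transfers verbatim, and indeed the paper (writing out $d=2$) explicitly covers neighborhoods of all four corners — but as written your decomposition does not cover the integration domain.

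Beyond that gap, your corner estimate is correct but over-engineered, and the claim that ``the quadratic vanishing of the $k=1$ factor is the crux'' is not right. The paper uses a far simpler observation: near a corner, $\hat{p}(\lambda)<0$, so every factor $1+\hat{p}(\lambda)/k$ lies in $(0,1]$, hence $\lvert\Pi_n(\hat{p}(\lambda))\rvert\le 1$ and $\lvert\hat{\Lambda}_n(\lambda)\rvert\le 1/(n+1)$. This crude polynomial decay already kills the $(\log n)^{d/2}$ prefactor after integrating over the fixed-size corner neighborhoods; no Gamma-function asymptotics and no quadratic factor are needed. Equivalently, in your notation $n^{-\hat{p}(\mu)}\le 1$ since $\hat{p}(\mu)\ge 0$ near $\mu=0$, so the $1/(n+1)$ prefactor suffices and the extra $\lVert\mu\rVert^2$ cancellation is superfluous. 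Finally, the closing appeal to a Riemann--Lebesgue estimate for uniformity over $x$ is unnecessary: since $\lvert e^{-i\langle\eta,x\rangle}\rvert=1$, the supremum over $x$ is automatically bounded by the $L^1$ norm on the Fourier side, which is exactly what the regional decomposition controls.
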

Similar to the one dimensional case, the next result is immediate from the above theorem.
\begin{cor}
\label{Cor:d-0-color-LLT}
Assume that $\Pbold\left(X_1 = \pm e_i\right) = \frac{1}{2d}$ for $1 \leq i \leq d$, where $e_i$ is the
$i$-th unit vector in direction $i$. Then, as $n \rightarrow \infty$ 
\begin{equation}
\mathbb{P}\left(Z_{n}=0\right) \sim \frac{1}{\left(\sqrt{2 \pi d \log n}\right)^d}.
\end{equation} 
\end{cor}

\noindent
{\bf Remark:} The 
assumption $\mathbb{P}\left[X_{1}=0 \right]>0$ can be removed, 
at least for some cases. Theorem \ref{llt4} and Theorem \ref{llt5} are such examples. 
Because of certain technical difficulties,
we do not know the full generality under which the local limit 
theorem holds, though we conjecture that it holds for all the cases. 

\subsection{Sketch of the Main Tools Used in the Proofs}
\label{SubSec:Tech}
There are few standard methods for analyzing finite color urn models which are mainly based on 
martingale techniques \cite{Gouet, maulik1, maulik2, DasMau11} 
and embedding into continuous time pure birth processes \cite{AthKar68, Svante1, Svante2, BaiHu05}. 
Typically the analysis of a finite color urn is heavily dependent on the 
\emph{Perron-Frobenius theory} \cite{Sene06} of matrices with positive entries 
\cite{AthKar68, Gouet, Svante1, Svante2, BaiHu05, maulik1, DasMau11}. 
The absence of such a theory
for infinite dimensional matrices makes the analysis of urn with infinitely many 
colors quite difficult and challenging. 

Our approach is to relate the $n$-th configuration of the urn to the underlying 
Markov chain, which in our case is a bounded increment random walk. In particular, we show that 
the distribution of $Z_n$, the color of the $\left(n+1\right)$-th selected ball can be 
represented by
\begin{equation}
Z_n \ed Z_{0}+\displaystyle\sum_{j=1}^{n}I_{j}X_{j},
\label{Equ:Representation-1}
\end{equation}
where $\{I_{j}\}_{j\geq 1}$ are independent Bernoulli random variables with
$\bE\left[I_{j}\right] = \frac{1}{j+1}$,
$j \geq 1$ and are independent of 
$\{X_{j}\}_{j\geq 1}$; and $Z_{0}$ is a random vector taking values in $\mathbb{Z}^{d}$ 
distributed according to the probability vector $U_{0}$ and is 
independent of $\left(\{I_{j}\}_{j\geq 1}; \{X_{j}\}_{j\geq1}\right)$.
Thus we can write
\begin{equation}
Z_n \ed S_{\tau_n},
\label{Equ:Representation-2}
\end{equation}
where $\left\{S_n\right\}_{n \geq 0}$ is the random walk with i.i.d. increments $\{X_{j}\}_{j\geq 1}$ 
starting at $X_0$
and
$\tau_n := \sum_{j=1}^n I_j$ is a stopping time which is independent of $\left(S_n\right)_{n \geq 0}$. 
This helps us to derive the central and local limit theorems which are stated earlier.
This approach of coupling with underlying Markov chain
is entirely new and it helps us to completely bypass the technical difficulties which one may face in using
the eigenvalue techniques in the infinite color case.
We present this representation as an independent result in the following section
(see Theorem \ref{LRW1}).


\section{Auxiliary Results}
\label{SubSec:Intermediate}
In this section, we present two results which we need to prove our main results.
These results are two very important tools for studying infinite color urn models associated with
random walks on $\Zbold^d$ and hence presented separately.
 
Define $\Pi_{n}\left(z\right)=\displaystyle\prod_{j=1}^{n}\left(1+\frac{z}{j}\right)$ for $z \in \mathbb{C}.$ 
It is known from Euler product formula for gamma function, which is also referred to as 
Gauss's formula (see page 178 of \cite{Con78}), that 
\begin{eqnarray}\label{Euler}
\displaystyle \lim_{n\to \infty} \frac{\Pi_{n}(z)}{n^{z}}\Gamma(z+1)=1
\end{eqnarray} uniformly on compact subsets of 
$\mathbb{C}\setminus\{-1,-2, -3, \ldots\}$.

Recall  
$e\left(\lambda\right) := \sum _{v \in B}e^{\langle\lambda, v\rangle}p(v)$ 
is the moment generating function of $X_1$. It is easy 
to note that $e\left(\lambda\right)$ is an eigenvalue of $R$ corresponding to the right eigenvector 
$x\left(\lambda\right)=\left(e^{\langle \lambda, v\rangle}\right)_{ v \in \mathbb{Z}^{d}}^{T}$. 
Let $\mathcal{F}_{n}=\sigma \left(U_{j}\colon 0\leq j\leq n\right), n \geq 0$ be the natural filtration. 
Define 
\[\overline{M}_{n}\left(\lambda\right)=\frac{U_{n}x\left(\lambda\right)}{\Pi_{n}\left(e\left(\lambda\right)\right)}\]
From the fundamental recursion (\ref{recurssion}) we get,
\[ U_{n+1}x\left(\lambda\right)=U_{n}x\left(\lambda\right)+\mathcal{X}_{n+1}Rx\left(\lambda\right)\]
Thus,\begin{eqnarray*}
\mathbb{E}\left[U_{n+1}x\left(\lambda\right)\Big{\lvert} \mathcal{F}_{n} \right]&= U_{n}x\left(\lambda\right)+e\left(\lambda\right)
 \mathbb{E}\left[\mathcal{X}_{n+1}x\left(\lambda\right)\Big{\lvert} \mathcal{F}_{n}\right]
=\left(1+\frac{e\left(\lambda\right)}{n+1}\right)U_{n}x\left(\lambda\right).
\end{eqnarray*} 
Therefore, $\overline{M}_{n}\left(\lambda\right)$ is a non-negative martingale for every 
$\lambda \in \mathbb{R}^{d}$. In particular 
$\mathbb{E}\left[\overline{M}_{n}\left(\lambda\right)\right]= \overline{M}_{0}\left(\lambda\right)$.

We now present a representation of the marginal distribution of 
$Z_{n}$ in terms of the increments $\left(X_{j}\right)_{j \geq 1}$. As mentioned earlier, this particular representation is interesting and non-trivial, as it necessarily demonstrates that the marginal distribution of the randomly selected color behaves like a delayed random walk.
\begin{theorem}
\label{LRW1}
For each $n\geq 1$,
\begin{eqnarray}\label{Marginal}
Z_{n}\stackrel{d}{=}Z_{0}+\displaystyle\sum_{j=1}^{n}I_{j}X_{j}.
\end{eqnarray} 
where $\{I_{j}\}_{j\geq 1}$ are independent Bernoulli random variables such that
$\bE\left[I_{j}\right] = \frac{1}{j+1}$,
$j \geq 1$
and are independent of 
$\{X_{j}\}_{j\geq 1}$; and $Z_{0}$ is a random vector taking values in $\mathbb{Z}^{d}$ 
distributed according to the probability vector $U_{0}$ and is 
independent of $\left(\{I_{j}\}_{j\geq 1}; \{X_{j}\}_{j\geq1}\right)$.
\end{theorem}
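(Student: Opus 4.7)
The plan is to compare moment generating functions on both sides of (\ref{Marginal}). Since $X_1$ has bounded support on the finite set $B \subset \Zbold^d$ and $U_0$ has finite support on $\Zbold^d$ by hypothesis, both $Z_n$ and $Z_0 + \sum_{j=1}^n I_j X_j$ are discrete random vectors with bounded support for each fixed $n$. Consequently their MGFs are finite on all of $\Rbold^d$ and uniquely determine their distributions, so it suffices to verify the MGFs agree at every $\lambda \in \Rbold^d$.

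For the right-hand side, independence of $Z_0$, $\{I_j\}_{j\geq 1}$ and $\{X_j\}_{j\geq 1}$ together with
\[
\mathbb{E}\bigl[e^{\langle \lambda, I_j X_j\rangle}\bigr] = \frac{j}{j+1} + \frac{e(\lambda)}{j+1} = \frac{j}{j+1}\Bigl(1 + \tfrac{e(\lambda)}{j}\Bigr)
\]
give, upon taking the product over $j = 1,\ldots,n$ and using $\prod_{j=1}^n \tfrac{j}{j+1} = \tfrac{1}{n+1}$,
\[
\mathbb{E}\Bigl[e^{\langle \lambda,\, Z_0 + \sum_{j=1}^n I_j X_j\rangle}\Bigr] = U_0 x(\lambda) \cdot \frac{\Pi_n\bigl(e(\lambda)\bigr)}{n+1},
\]
since $U_0 x(\lambda) = \mathbb{E}\bigl[e^{\langle \lambda, Z_0\rangle}\bigr]$.

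For the left-hand side, the identity $\mathbb{P}(Z_n = v) = \mathbb{E}[U_{n,v}]/(n+1)$ yields
\[
\mathbb{E}\bigl[e^{\langle \lambda, Z_n\rangle}\bigr] = \frac{\mathbb{E}\bigl[U_n x(\lambda)\bigr]}{n+1}.
\]
Invoking the martingale property of $\overline{M}_n(\lambda) = U_n x(\lambda)/\Pi_n(e(\lambda))$ established immediately above the statement, together with the empty-product convention $\overline{M}_0(\lambda) = U_0 x(\lambda)$, gives $\mathbb{E}[U_n x(\lambda)] = \Pi_n(e(\lambda))\, U_0 x(\lambda)$. This matches the expression obtained for the right-hand side.

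The only point requiring minor care is confirming $\mathbb{E}[\overline{M}_n(\lambda)] = \overline{M}_0(\lambda)$ for every $\lambda \in \Rbold^d$, rather than merely the conditional martingale identity. This is immediate, however: $\overline{M}_n(\lambda)$ is nonnegative, and $\overline{M}_0(\lambda) = U_0 x(\lambda) < \infty$ because $U_0$ has finite support and every coordinate of $x(\lambda)$ is finite. So the identity passes to ordinary expectations and the two MGFs coincide on all of $\Rbold^d$, proving (\ref{Marginal}). Beyond this, the argument is purely algebraic and presents no real obstacle.
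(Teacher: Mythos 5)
Your proof is correct and follows essentially the same route as the paper's: compute the moment generating function of $Z_n$ via the martingale $\overline{M}_n(\lambda)$ to obtain $\overline{M}_0(\lambda)\prod_{j=1}^n\bigl(1-\tfrac{1}{j+1}+\tfrac{e(\lambda)}{j+1}\bigr)$, and recognise this as the MGF of $Z_0+\sum_{j=1}^n I_j X_j$. You are somewhat more explicit than the paper — you spell out the factorisation of the right-hand MGF, the telescoping product $\prod_{j=1}^n \tfrac{j}{j+1}=\tfrac{1}{n+1}$, the passage from the conditional to the unconditional martingale identity, and the justification that agreement of MGFs on $\Rbold^d$ determines the (bounded-support) distributions — but the underlying argument is the same.
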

\begin{proof}
As noted before, the probability mass function for the color of the $\left(n+1\right)$-th
selected ball, namely $Z_n$, is 
$\left(\frac{\Ebold\left[U_{n,v}\right]}{n+1}\right)_{v in {\mathbb Z}^d}$. So 
for $\lambda \in \mathbb{R}^{d}$, the moment generating function of $Z_{n}$ is given by 
\begin{eqnarray}
      \frac{1}{n+1}\sum_{v\in \mathbb{Z}^{d}}e^{\langle\lambda,v\rangle }\mathbb{E}\left[U_{n,v}\right]
& = & \frac{\Pi_{n}\left(e(\lambda)\right)}{n+1}\mathbb{E}\left[\overline{M}_{n}(\lambda)\right] \nonumber\\
& = & \frac{\Pi_{n}\left(e(\lambda)\right)}{n+1}\overline {M}_{0}(\lambda) \nonumber\\
& = & \overline {M}_{0}(\lambda) \prod_{j=1}^{n}\left(1-\frac{1}{j+1}+\frac{e(\lambda)}{j+1}\right) \label{MGF}.
\end{eqnarray} 
The equation \eqref{Marginal} follows from \eqref{MGF}.
\end{proof}

Our next theorem states that around a non-trivial closed neighborhood of $0$ the martingales
$\left( \overline{M}_{n}\left(\lambda\right) \right)_{n \geq 0}$ are uniformly (in $\lambda$)
${\mathcal L}_2$ bounded. 
\begin{theorem}
\label{Martingale}
There exists $\delta > 0$ such that 
\begin{equation}
\sup_{\lambda \in \left[-\delta, \delta\right]^{d}} \sup_{n\geq 1} 
\mathbb{E} \left[\overline{M}^{2}_{n}\left(\lambda\right)\right]<\infty.
\label{Equ:L-2-bound}
\end{equation}
\end{theorem}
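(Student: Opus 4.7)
The plan is to derive a second-moment recursion for $U_n x(\lambda)$ directly from the urn dynamics and control the resulting inhomogeneous equation using the Euler asymptotic (\ref{Euler}), exploiting the fact that $e(2\lambda) - 2e(\lambda)$ equals $-1$ at $\lambda = 0$.

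First, applying (\ref{recurssion}) to $x(\lambda)$ and using $R x(\lambda) = e(\lambda) x(\lambda)$ yields $\chi_{n+1} R x(\lambda) = e(\lambda)\, e^{\langle \lambda, V_{n+1}\rangle}$, where $V_{n+1}$ is the color sampled at step $n+1$ and has conditional law $U_n/(n+1)$ given $\mathcal{F}_n$. Squaring, conditioning on $\mathcal{F}_n$, and using $\mathbb{E}[e^{k \langle \lambda, V_{n+1}\rangle} \mid \mathcal{F}_n] = U_n x(k\lambda)/(n+1)$ for $k=1,2$, I would obtain
\begin{equation*}
\mathbb{E}\!\left[(U_{n+1} x(\lambda))^2 \,\big|\, \mathcal{F}_n\right] = \left(1 + \frac{2 e(\lambda)}{n+1}\right)(U_n x(\lambda))^2 + \frac{e(\lambda)^2}{n+1}\, U_n x(2\lambda).
\end{equation*}
Taking unconditional expectation, substituting $\mathbb{E}[U_n x(2\lambda)] = \Pi_n(e(2\lambda))\, U_0 x(2\lambda)$ (from the martingale $\overline{M}_n(2\lambda)$), and dividing by $\Pi_{n+1}(e(\lambda))^2 = \Pi_n(e(\lambda))^2(1+e(\lambda)/(n+1))^2$ gives, for $b_n(\lambda) := \mathbb{E}[\overline{M}_n^2(\lambda)]$,
\begin{equation*}
b_{n+1}(\lambda) = \frac{1 + 2e(\lambda)/(n+1)}{(1 + e(\lambda)/(n+1))^2}\, b_n(\lambda) + \frac{e(\lambda)^2\, U_0 x(2\lambda)}{(n+1)(1 + e(\lambda)/(n+1))^2}\cdot \frac{\Pi_n(e(2\lambda))}{\Pi_n(e(\lambda))^2}.
\end{equation*}
Since $e(\lambda) > 0$ and $(1+c)^2 \geq 1 + 2c$, the first coefficient is $\leq 1$ (this is the martingale property re-expressed), so $b_{n+1}(\lambda) \leq b_n(\lambda) + c_n(\lambda)$ with $c_n(\lambda) \geq 0$ the inhomogeneous term.

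Next I would show $\sum_n c_n(\lambda)$ is finite uniformly on $[-\delta,\delta]^d$ for small $\delta$. By (\ref{Euler}) and continuity of $e$ with $e(0)=1$, both $e(\lambda)$ and $e(2\lambda)$ stay in a small complex neighborhood of $1$ (hence away from $\{0,-1,-2,\ldots\}$) for small $\delta$, and uniformly on that cube
\begin{equation*}
\frac{\Pi_n(e(2\lambda))}{\Pi_n(e(\lambda))^2} \sim \frac{\Gamma(e(\lambda)+1)^2}{\Gamma(e(2\lambda)+1)} \cdot n^{e(2\lambda) - 2 e(\lambda)}.
\end{equation*}
The exponent equals $-1$ at $\lambda = 0$, so by continuity $e(2\lambda) - 2e(\lambda) \leq -\frac{3}{4}$ on $[-\delta,\delta]^d$ for $\delta$ small enough, and the Gamma and $U_0 x(2\lambda)$ prefactors remain bounded. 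Combined with the extra $1/(n+1)$ factor, $c_n(\lambda) = O(n^{-7/4})$ uniformly in $\lambda$, hence summable. Since $b_0(\lambda) = (U_0 x(\lambda))^2$ is a finite sum of exponentials (as $U_0$ has finite support) and therefore bounded on $[-\delta,\delta]^d$, telescoping yields $\sup_{n\geq 1,\, \lambda \in [-\delta,\delta]^d} b_n(\lambda) < \infty$, which is exactly (\ref{Equ:L-2-bound}).

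The only point I expect to require real care is ensuring that the Euler asymptotic is applied in a manner genuinely uniform in $\lambda$, which is why I restrict to the compact cube $[-\delta,\delta]^d$ where the ratio $\Pi_n(e(2\lambda))/\Pi_n(e(\lambda))^2$ can be controlled with constants bounded in $\lambda$. The argument ultimately succeeds because of the exact identity $e(0) - 2 e(0) = 1 - 2 = -1$: together with the prefactor $1/(n+1)$ this produces a summable tail. If this cancellation at $\lambda = 0$ failed even by a constant, a naive second-moment estimate would diverge.
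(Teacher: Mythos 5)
Your proposal is correct and follows essentially the same route as the paper: derive the second-moment recursion for $U_n x(\lambda)$, divide by $\Pi_{n+1}^2(e(\lambda))$, observe that the homogeneous coefficient $\bigl(1+\tfrac{2e(\lambda)}{n+1}\bigr)/\bigl(1+\tfrac{e(\lambda)}{n+1}\bigr)^2\le 1$, and control the inhomogeneous term uniformly on a small cube via the Euler product asymptotic~\eqref{Euler}, using the fact that $2e(\lambda)-e(2\lambda)>0$ near $\lambda=0$ (where it equals $1$) to get a summable tail. The only cosmetic difference is that you telescope the one-step inequality $b_{n+1}\le b_n+c_n$ directly and pin down a concrete exponent ($\le -3/4$), whereas the paper first writes out the explicit solved recursion~\eqref{M2} and then passes to the bound~\eqref{B1} and a $\min$ over the compact cube; these are interchangeable.
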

 
\begin{proof} 
From (\ref{recurssion}), we obtain
\begin{eqnarray*}
\mathbb{E}\left[\left(U_{n+1}x\left(\lambda\right)\right)^{2}\Big{\lvert }\mathcal{F}_{n}\right]
& = & \left(U_{n}x\left(\lambda\right)\right)^{2}
      +2e\left(\lambda\right)U_{n}x\left(\lambda\right)
      \mathbb{E}\left[\mathcal{X}_{n+1}x\left(\lambda\right)\Big{\lvert }\mathcal{F}_{n}\right] \\
&   & \quad  + e^{2}\left(\lambda\right)
      \mathbb{E}\left[\left(\mathcal{X}_{n+1}x\left(\lambda\right)\right)^{2}\Big{\lvert }\mathcal{F}_{n}\right]
\end{eqnarray*}
It is easy to see that 
\begin{equation}
\mathbb{E}\left[\mathcal{X}_{n+1}x\left(\lambda\right)\Big{\lvert} \mathcal{F}_{n}\right]=\frac{1}{n+1}U_{n}x\left(\lambda\right)\text{ and }
\mathbb{E}\left[\left(\mathcal{X}_{n+1}x\left(\lambda\right)\right)^{2}\Big{\lvert }\mathcal{F}_{n}\right]=\frac{1}{n+1}U_{n}x\left(2\lambda\right).
\end{equation}
Therefore, we get the recursion
\begin{eqnarray}
\mathbb{E}\left[\left(U_{n+1}x\left(\lambda\right)\right)^{2}\right]=\left(1+\frac{2e\left(\lambda\right)}{n+1}\right)
\mathbb{E}\left[\left(U_{n}x\left(\lambda\right)\right)^{2}\right] \nonumber \\ 
 \quad +\frac{e^{2}\left(\lambda\right)}{n+1}\mathbb{E}\left[U_{n}x\left(2\lambda\right)\right]. \label{2M}
\end{eqnarray}
Dividing both sides of (\ref{2M}) by $\Pi^{2}_{n+1}\left(\lambda\right)$,
\begin{eqnarray}\label{22M}
 \mathbb{E}\left[\overline{M}^{2}_{n+1}\left(\lambda\right)\right]=\frac{\left(1+\frac{2e\left(\lambda\right)}{n+1}\right)}
{\left(1+\frac{e\left(\lambda\right)}{n+1}\right)^{2}}\mathbb{E}\left[\overline{M}^{2}_{n}\left(\lambda\right)\right]+\frac
{e^{2}\left(\lambda\right)}{n+1}\frac{\mathbb{E}\left[U_{n}x\left(2\lambda\right)\right]}{\Pi^{2}_{n+1}\left(\lambda\right)}.
\end{eqnarray}

$\overline{M}_{n}\left(2\lambda\right)$ being a martingale, we obtain $\mathbb{E}\left[U_{n}x\left(2\lambda\right)\right]=
\Pi_{n}\left(e\left(2\lambda\right)\right)\overline{M}_{0}\left(2\lambda\right)$. Therefore from (\ref{22M}), we get
\begin{eqnarray}
      \mathbb{E}\left[\overline{M}_{n}^{2}\left(\lambda\right)\right]
& = & \frac{\Pi_{n}\left(2e\left(\lambda\right)\right)}{\Pi_{n}\left(e\left(\lambda\right)\right)^{2}}
      \overline{M}_{0}^{2}\left(\lambda\right) \nonumber \\
&   & \quad + \sum_{k=1}^{n} \frac{e^{2}\left(\lambda\right)}{k}
      \left\{\prod_{j>k}^{n}\frac{\left(1+\frac{2e\left(\lambda\right)}{j}\right)}
                                 {\left(1+\frac{e\left(\lambda\right)}{j}\right)^{2}}\right\}
      \frac{\Pi_{k-1}\left(e\left(2\lambda\right)\right)}
           {\Pi_{k}^{2}\left(e\left(\lambda\right)\right)}\overline{M}_{0}\left(2\lambda\right). \label{M2}
\end{eqnarray}
We observe that as $e\left(\lambda\right)>0$, so
$\frac{1+\frac{2e\left(\lambda\right)}{j}}{\left(1+\frac{e\left(\lambda\right)}{j}\right)^{2}} \leq 1$ 
and hence 
$\frac{\Pi_{n}\left(2e\left(\lambda\right)\right)}{\Pi^{2}_{n}\left(e\left(\lambda\right)\right)}\leq 1$.
Thus
\begin{eqnarray}\label{B1}
 \mathbb{E}\left[\overline{M}^{2}_{n}\left(\lambda\right)\right]\leq \overline{M}^{2}_{0}\left(\lambda\right)+e^{2}\left(\lambda\right)
\overline{M}_{0}\left(2\lambda\right)\displaystyle \sum_{k=1}^{n}\frac{1}{k}\frac{\Pi_{k-1}\left(e\left(2\lambda\right)\right)}
{\Pi^{2}_{k}\left(e\left(\lambda\right)\right)}\mbox{.}
\end{eqnarray}
Using (\ref{Euler}), we know that 
\begin{equation}
\Pi_{n}^{2}\left(e\left(\lambda\right)\right)\sim \frac{n^{2e\left(\lambda\right)}}{\Gamma^{2}\left(e\left(\lambda\right)+1\right)}.
\end{equation}
Since $e\left(0\right)=1$ and $e\left(\lambda\right)$ is continuous as a function of $\lambda$, 
so given $\eta>0$, there exists 
$0<K_{1},K_{2}<\infty$, such that for all $\lambda \in \left[-\eta, \eta\right]^{d}$, $K_{1}\leq e\left(\lambda\right)\leq K_{2}$. Since the convergence in (\ref{Euler}) is uniform on compact subsets of $\left[0,\infty\right),\mbox{ given }\epsilon>0$ there exists $N_{1}>0$ such that for all $n\geq N_{1}$ and $\lambda \in \left[-\eta, \eta\right]^{d}$,
\begin{eqnarray*}
&     &  \left(1-\epsilon\right)\frac{\Gamma^{2}\left(e\left(\lambda\right)+1\right)}
                             {\Gamma \left(e\left(2\lambda\right)+1\right)}
\sum_{k\geq N_{1}}^{n} \frac{1}{k^{1+2e\left(\lambda\right)-e\left(2\lambda\right)}} \\
& \leq & \sum _{k\geq N_{1}}^{n}\frac{1}{k}\frac{\Pi_{k-1}\left(e\left(2\lambda\right)\right)} 
                                {\Pi^{2}_{k}\left(e\left(\lambda\right)\right)} \\
& \leq &\left(1+\epsilon\right)\frac{\Gamma^{2}\left(e\left(\lambda\right)+1\right)}{\Gamma \left(e\left(2\lambda\right)+1\right)}
\displaystyle \sum_{k\geq N_{1}}^{n}\frac{1}{k^{1+2e\left(\lambda\right)-e\left(2\lambda\right)}}.
\end{eqnarray*}
Recall that $e\left(\lambda\right)=\textstyle\sum _{v \in B}e^{\langle\lambda, v\rangle}p(v)$. 
Since the cardinality of $B$ is finite, we can choose a $\delta_{0}>0 $ such that for every $\lambda \in \left[-\delta_{0}, \delta_{0}\right]^{d}$, 
$2e\left(\lambda\right)-e\left(2\lambda\right)>0$. Choose $\delta =\textstyle \min \{\eta,\delta_{0}\}$.
Since $2e\left(\lambda\right)-e\left(2\lambda\right)$ is continuous as a function of $\lambda$, there exists a $\lambda_{0}\in \left[-\delta,\delta\right]^{d}$ such that 
$\textstyle\min_{\lambda \in \left[-\delta,\delta\right]^{d}}2e\left(\lambda\right)-e\left(2\lambda\right)
=2e\left(\lambda_{0}\right)-e\left(2\lambda_{0}\right)>0$. Therefore 
\begin{eqnarray*}
 \displaystyle \sum_{k=1}^{\infty}\frac{1}{k^{1+2e\left(\lambda\right)-e\left(2\lambda\right)}}\leq 
 \sum_{k=1}^{\infty}\frac{1}{k^{1+2e\left(\lambda_{0}\right)-e\left(2\lambda_{0}\right)}}.
\end{eqnarray*}
Therefore given $\epsilon >0$
there exists $N_{2}>0 $ such that $\forall \lambda \in \left[-\delta, \delta\right]^{d}$. 

\begin{eqnarray*}
 \displaystyle \sum_{k>N_{2}}^{\infty}\frac{1}{k^{1+2e\left(\lambda\right)-e\left(2\lambda\right)}}\leq \sum_{k>N_{2}}^{\infty}\frac{1}
{k^{1+2e\left(\lambda_{0}\right)-e\left(2\lambda_{0}\right)}}<\epsilon.
\end{eqnarray*}
$\textstyle\frac{\Gamma^{2}\left(e\left(\lambda\right)+1\right)}
{\Gamma \left(e\left(2\lambda\right)+1\right)}$, $e^{2}\left(\lambda\right)$ and $\overline{M}_{0}\left
(2\lambda\right)$ being continuous as functions of $\lambda$ are bounded for $ \lambda \in \left[-\delta, \delta\right]^{d}$. 
Choose $N=\max\{N_{1},N_{2}\}$. From (\ref{B1}) we obtain for all $n\geq N$ 
\begin{eqnarray}\label{Delta1}
 \mathbb{E}\left[\overline{M}^{2}_{n}\left(\lambda\right)\right]\leq \overline{M}^{2}_{0}\left(\lambda\right)+C_{1}
\displaystyle \sum_{k=1}^{N}\frac{1}{k}\frac{\Pi_{k-1}\left(e\left(2\lambda\right)\right)}
{\Pi^{2}_{k}\left(e\left(\lambda\right)\right)}+\epsilon
\end{eqnarray} for an appropriate positive constant $C_{1}$.

$\textstyle\sum_{k=1}^{N}\frac{1}{k}\frac{\Pi_{k-1}\left(e\left(2\lambda\right)\right)}
{\Pi^{2}_{k}\left(e\left(\lambda\right)\right)}$ and $\overline{M}^{2}_{0}\left(\lambda\right)$ being continuous as functions of $\lambda$, are bounded for $\lambda \in \left[-\delta,\delta\right]^{d}$.
Therefore, from (\ref{Delta1}) we obtain that there 
exists $C>0$ such that
for all $\lambda \in \left[-\delta,\delta\right]^{d}$ and for all $n\geq 1$
\begin{eqnarray*}
 \mathbb{E}\left[\overline{M}^{2}_{n}\left(\lambda\right)\right]\leq C.
\end{eqnarray*}
This proves \eqref{Equ:L-2-bound}.
\end{proof}

\section{Proofs of the Main Results}\label{Sec:Proofs}
We first note that to derive the central and local limit theorems, without loss, we may assume that 
the initial configuration of the urn consists of one ball of color $0$, that is, 
$Z_0 \equiv 0$. Hence, it follows from \eqref{Marginal} that 
\begin{equation*}
Z_{n}\stackrel{d}{=}\displaystyle \sum_{j=1}^{n}I_{j}X_{j}.
\end{equation*}

\subsection{Proofs for the Expected Configuration}
\begin{proof}[Proof of Theorem \ref{GRW}]
Observe that 
\begin{equation}
\mathbb{E}\left[\sum_{j=1}^{n}I_{j}X_{j}\right]-\bmu \log n 
= \sum_{j=1}^{n}\frac{1}{j}\bmu- \bmu\log n\longrightarrow \gamma\bmu, 
\end{equation}
where $\gamma$ is the Euler's constant. \\

\noindent
{\bf Case I:} Let $d=1$. Let $s^{2}_{n}=\mbox{Var}\left(\sum_{j=1}^{n}I_{j}X_{j}\right)$.
It is easy to note that 
\[
s^{2}_{n} = \sum_{j=1}^{n}\frac{1}{j+1}\mathbb{E}\left[X_{1}^{2}\right]-\frac{\mu^{2}}{(j+1)^{2}}
\sim \sigma ^{2}\log n. 
\]
As the cardinality of $B$ is finite, so for any $\epsilon>0$, we have 
\[
\frac{1}{s^{2}_{n}} \sum_{j=1}^{n}
\mathbb{E}\left[I_{j}X^{2}_{j}1_{\{I_{j}X_{j}>\epsilon s_{n}\}}\right]\longrightarrow 0 
\]
as $n \to \infty$. 
Therefore, by the Lindeberg Central Limit theorem, we conclude that as $n \to \infty$
\[
\frac{Z_{n}-\mu \log n}{\sigma \sqrt{\log n}}\Rightarrow N(0,1).
\]
This completes the proof in this case.\\

\noindent
{\bf Case II:} Now suppose $d\geq 2$.
Let $\varSigma_{n}=\left[\sigma_{k,l}(n)\right]_{d\times d
}$ denote the variance-covariance matrix for $\textstyle\sum_{j=1}^{n}I_{j}X_{j}$. Then by calculations
similar to that in one-dimension it is easy to see that for all $k,l \in \{1,2,\ldots d\}$ as $n \to \infty$ 
\begin{eqnarray*}
 \frac{\sigma_{k,l}(n)}{(\log n)\sigma_{k,l}}\longrightarrow 1. 
\end{eqnarray*}
Therefore for every $\theta \in \mathbb{R}^{d}$, by Lindeberg Central Limit Theorem in one dimension, 
\begin{eqnarray*}
 \frac{\langle \theta,\displaystyle \sum_{j=1}^{n}I_{j}X_{j}\rangle -\langle \theta, \bmu\log n\rangle }
{\sqrt{\log n}\left(\theta\varSigma \theta^T\right)^{1/2} }\Rightarrow N(0,1)\mbox{ as }n \to\infty.
\end{eqnarray*}
Therefore by Cramer-Wold device, it follows that as $n \to \infty$
\begin{eqnarray*}
 \frac{\displaystyle \sum_{j=1}^{n}I_{j}X_{j}-\bmu\log n}{\sqrt{\log n}}\Rightarrow N_{d}\left(0,\varSigma\right).
\end{eqnarray*} 
So we conclude that as $n \to \infty$
\begin{eqnarray*}
 \frac{Z_{n}- \bmu \log n}{\sqrt{\log n}} \Rightarrow N_{d}\left(0,\varSigma\right).
\end{eqnarray*}
This completes the proof.
\end{proof}

\subsection{Proofs for Random Configuration}
In this subsection we will present the proof of Theorem \ref{ASd}. 
We start with the following lemma which is needed in the proof of Theorem \ref{ASd}.
\begin{lem}\label{PC}
Let $\delta$ be as in Theorem \ref{Martingale}, then for every $\lambda \in \left[-\delta, \delta \right]^{d}$ 
as $n \to \infty$,
\begin{equation}
\overline{M}_{n}\left(\frac{\lambda}{\sqrt{\log n}}\right)\stackrel{p}{\longrightarrow} 1.
\end{equation}
\end{lem}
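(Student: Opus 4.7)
The proof proceeds by a second-moment (Chebyshev) argument: we show that both $\mathbb{E}[\overline{M}_n(\lambda/\sqrt{\log n})]$ and $\mathbb{E}[\overline{M}_n^2(\lambda/\sqrt{\log n})]$ converge to $1$ as $n \to \infty$. Then $\mathrm{Var}(\overline{M}_n(\lambda/\sqrt{\log n})) \to 0$, and Chebyshev's inequality yields the desired convergence in probability to the constant $1$.

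The first moment is immediate from the martingale property: $\mathbb{E}[\overline{M}_n(\theta)] = \overline{M}_0(\theta) = \sum_v U_{0,v}\, e^{\langle \theta, v\rangle}$ is continuous in $\theta$ (since $U_0$ has finite support) and equals $1$ at $\theta = 0$; hence its value at $\theta = \lambda/\sqrt{\log n}$ tends to $1$. For the second moment we start from the explicit expression \eqref{M2} obtained in the proof of Theorem \ref{Martingale} and simplify using the closed form $\Pi_m(z) = \Gamma(m+1+z)/(\Gamma(m+1)\Gamma(1+z))$. Writing $a_n := e(\lambda/\sqrt{\log n})$ and $b_n := e(2\lambda/\sqrt{\log n})$, the inner products telescope and one obtains
\[
\mathbb{E}\!\left[\overline{M}_n^2(\lambda/\sqrt{\log n})\right]
= A_n\, \overline{M}_0^2(\lambda/\sqrt{\log n})
+ A_n\, a_n^2\, \overline{M}_0(2\lambda/\sqrt{\log n})\, S_n,
\]
where $A_n := \Pi_n(2a_n)/\Pi_n(a_n)^2$ and $S_n := \sum_{k=1}^n \Gamma(k+b_n)\Gamma(1+2a_n)/[\Gamma(k+1+2a_n)\Gamma(1+b_n)]$. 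Since $a_n, b_n \to 1$ and $\overline{M}_0(\lambda/\sqrt{\log n}), \overline{M}_0(2\lambda/\sqrt{\log n}) \to 1$ by continuity, it then remains to prove $A_n \to 1/2$ and $S_n \to 1$, which together give the total $\tfrac12 + \tfrac12 = 1$.

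For $A_n$, the uniform-on-compacts asymptotic $\Gamma(n+1+z)/\Gamma(n+1) \sim n^z$ (a direct consequence of \eqref{Euler}) reduces $A_n$ to $\Gamma(1+a_n)^2/\Gamma(1+2a_n)$, which converges to $\Gamma(2)^2/\Gamma(3) = 1/2$ as $a_n \to 1$. For $S_n$, the $k$-th summand converges pointwise to $\Gamma(k+1)\Gamma(3)/[\Gamma(k+3)\Gamma(2)] = 2/[(k+1)(k+2)]$, whose infinite sum telescopes to $1$. Dominated convergence is justified by the Taylor expansion $2e(\lambda) - e(2\lambda) = 1 + O(\|\lambda\|^2)$ near the origin: for all sufficiently large $n$ we have $2a_n - b_n \geq 1/2$, so $\Gamma(k+b_n)/\Gamma(k+1+2a_n) \leq C\,k^{-(1+2a_n-b_n)} \leq C\,k^{-3/2}$ uniformly in $k$, which is summable.

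The chief obstacle is the analysis of $S_n$, in which both the range of summation and each summand depend on $n$. It is the second-order Taylor cancellation $2e(\lambda) - e(2\lambda) = 1 + O(\|\lambda\|^2)$ that keeps the tail of the series summable uniformly in $n$ and thereby permits dominated convergence; a first-order estimate alone would be insufficient.
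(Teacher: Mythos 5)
Your proof is correct and follows essentially the same strategy as the paper: both compute $\mathbb{E}[\overline{M}_n^2(\lambda/\sqrt{\log n})]$ explicitly from the recursion \eqref{M2}, use the Euler/Gamma-function asymptotics to show it converges to $\tfrac12 + \tfrac12 = 1$, and then apply Chebyshev since the first moment is $\overline{M}_0(\lambda/\sqrt{\log n}) \to 1$. The minor departures — keeping the general $U_0$ instead of reducing to a single ball at $0$, and making the dominated-convergence bound explicit via the Taylor estimate $2e(\lambda)-e(2\lambda)=1+O(\|\lambda\|^2)$ rather than citing Theorem \ref{Martingale} — tighten the exposition but do not change the argument.
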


\begin{proof}
From equation (\ref{M2})
we get 
\begin{eqnarray*}
 \mathbb{E}\left[\overline{M}^{2}_{n}\left(\lambda\right)\right]=\frac{\Pi_{n}\left(2e(\lambda)\right)}{\Pi^{2}_{n}\left(e(\lambda)\right)
}+\frac{\Pi_{n}\left(2e(\lambda)\right)}{\Pi^{2}_{n}\left(e(\lambda)\right)}\displaystyle\sum_{k=1}^{n}\frac{e^{2}(\lambda)}{k}
\frac{\Pi_{k-1}\left(e(2\lambda)\right)}{\Pi_{k}\left(2e(\lambda)\right)}.
\end{eqnarray*}
Replacing $\lambda$ by $\lambda_{n}=\frac{\lambda}{\sqrt{\log n}}$, we obtain
\begin{eqnarray}\label{Eq:Martingale} 
\nonumber \mathbb{E}\left[\overline{M}^{2}_{n}\left(\lambda_{n}\right)\right]=\frac{\Pi_{n}\left(2e\left(\lambda_{n}\right)\right)}
{\Pi^{2}_{n}\left(e\left(\lambda_{n}\right)\right)}+ \frac{\Pi_{n}\left(2e\left(\lambda_{n}\right)\right)}
{\Pi^{2}_{n}\left(e\left(\lambda_{n}\right)\right)} \displaystyle \sum_{k=1}^{n}\frac{e^{2}\left(\lambda_{n}\right)}{k}
\frac{\Pi_{k-1}\left(e\left(2\lambda_{n}\right)\right)}{\Pi_{k}\left(2e\left(\lambda_{n}\right)\right)}\\
\mbox{}
\end{eqnarray}
Since the convergence in formula (\ref{Euler}) is uniform on compact sets of $\left[0,\infty \right)$, we observe that for $\lambda \in \left[-\delta, \delta\right]^{d}$
\begin{eqnarray*}
 \displaystyle \lim_{n \to \infty}\frac{\Pi_{n}\left(2e\left(\lambda_{n}\right)\right)}{\Pi^{2}_{n}\left(e\left(\lambda_{n}\right)\right)}
=\frac{\Gamma^{2}\left(2\right)}{\Gamma\left(3\right)}=\frac{1}{2}.
\end{eqnarray*}
We observe that $\textstyle\lim_{n \to \infty }e\left(\lambda_{n}\right)=1$ and 
\[ 
\lim_{n \to \infty}\frac{\Pi_{n}\left(2e(\lambda_{n})\right)}{\Pi^{2}_{n}\left(e(\lambda_{n})\right)} 
\frac{e^{2}\left(\lambda_{n}\right)}{k}
\frac{\Pi_{k-1}\left(e\left(2\lambda_{n}\right)\right)}{\Pi_{k}\left(2e\left(\lambda_{n}\right)\right)}
=\frac{1}{2}\frac{1}{k}\frac{\Pi_{k-1}(1)}{\Pi_{k}\left(2\right)}.
\]
Now using Theorem \ref{Martingale} and the dominated convergence theorem, we get 
\begin{eqnarray*}
\displaystyle \lim_{n\to \infty}\frac{\Pi_{n}\left(2e\left(\lambda_{n}\right)\right)}
{\Pi^{2}_{n}\left(e\left(\lambda_{n}\right)\right)} \displaystyle \sum_{k=1}^{n}\frac{e^{2}\left(\lambda_{n}\right)}{k}
\frac{\Pi_{k-1}\left(e\left(2\lambda_{n}\right)\right)}{\Pi_{k}\left(2e\left(\lambda_{n}\right)\right)}=\frac{1}{2}
\displaystyle \sum_{k=1}^{\infty}\frac{2}{(k+2)(k+1)}=\frac{1}{2}.
\end{eqnarray*}
Therefore, from (\ref{Eq:Martingale}) we obtain 
\begin{equation}
\mathbb{E}\left[\overline{M}_{n}^{2}\left(\lambda_{n}\right)\right]\longrightarrow 1 \mbox{ as } n \to \infty.
\end{equation}
Observing that $\bE\left[ \overline{M}_{n}\left(\lambda_{n}\right) \right] = 1$, we get 
\begin{equation}
\mbox{Var}\left( \overline{M}_{n}\left(\lambda_{n}\right) \right) \rightarrow 0,
\end{equation}
as $n \to \infty$. This implies
\[
\overline{M}_{n}\left(\lambda_{n}\right)\stackrel{p}{\longrightarrow} 1 \mbox{ as } n \to \infty,
\] 
completing the proof of the lemma.
\end{proof}

\begin{proof}[Proof of Theorem \ref{ASd}]
Note that $\Lambda_{n}$ is the random probability measure on $\mathbb{R}^{d}$ corresponding to the random probability vector 
$\frac{1}{n+1}U_{n}$. For $\lambda \in \mathbb{R}^{d}$ the corresponding moment generating function is given by
\begin{equation}
\frac{1}{n+1} \sum_{v \in \mathbb{Z}^{d}}e^{\langle \lambda, v\rangle}U_{n,v}
=
\frac{1}{n+1} U_{n}x\left(\lambda\right)=\frac{1}{n+1}
\overline{M}_{n}\left(\lambda\right)\Pi_{n}\left(e(\lambda)\right).
\end{equation}
The moment generating function corresponding to the scaled and centered random measure $\Lambda^{cs}_{n}$ is 
\begin{eqnarray*}
&   & \frac{1}{n+1}e^{-\langle \lambda, \bmu\sqrt{\log n}\rangle}U_{n}x\left(\frac{\lambda}{\sqrt{\log n}}\right) \\
& = & \frac{1}{n+1}e^{-\langle \lambda, \bmu\sqrt{\log n}\rangle}
\overline{M}_{n}\left(\frac{\lambda}{\sqrt{\log n}}\right)\Pi_{n}\left(e(\frac{\lambda}{\sqrt{\log n}})\right)
\end{eqnarray*} 

To show (\ref{Eq:PrCgs}) it is enough to show that for every subsequence $\{n_{k}\}_{k\geq 1}$, 
there exists a further subsequence $\{n_{k_{j}}\}_{j=1}^{\infty}$ such that as $j\to \infty$ 
\begin{equation}
\label{MGFCS}
\frac{e^{-\langle\lambda,\bmu\sqrt{\log n_{k_{j}}}\rangle}}{n_{k_{j}}+1}
\overline{M}_{n_{k_{j}}}\left(\frac{\lambda}{\sqrt{\log n_{k_{j}}}}\right)
\Pi_{n}\left(e\left(\frac{\lambda}{\sqrt{\log n_{k_{j}}}}\right)\right)\longrightarrow 
e^{\frac{\lambda\varSigma \lambda^{T}}{2}}
\end{equation}
for all $\lambda \in \left[-\delta, \delta\right]^{d}$ almost surely, where $\delta$ is as in 
Theorem \ref{Martingale}.
From Theorem \ref{GRW} we know that 
\[
\frac{Z_{n}-\bmu \log n}{\sqrt{\log n}} \Rightarrow N_{d}\left(0,\mathbb{I}_{d}\right).
\]
Therefore using (\ref{MGF}) as $n \to \infty$ we obtain,
\begin{eqnarray*}
e^{-\langle\lambda,\bmu\sqrt{\log n}\rangle}\mathbb{E}\left[e^{\langle\lambda,\frac{Z_{n}}{\sqrt{\log n}}\rangle}\right]=\frac{1}{n+1}e^{-\langle\lambda,\bmu\sqrt{\log n}\rangle}\Pi_{n}\left(e\left(\frac{\lambda}{\sqrt{ \log n}}\right)\right)\longrightarrow e^{\frac{\lambda\varSigma \lambda^{T}}{2}}. 
\end{eqnarray*}
 
Now using Theorem \ref{Rational2} from the appendix 
it is enough to show (\ref{MGFCS}) only for $\lambda \in \mathbb{Q}^{d}\cap\left[-\delta, \delta\right]^{d}$ 
which is equivalent to proving that for every $\lambda \in \mathbb{Q}^{d}\cap \left[-\delta, \delta\right]^{d}$ 
as $j \to \infty$ 
\begin{eqnarray*}
 \overline{M}_{n_{k_{j}}}\left(\frac{\lambda}{\sqrt{\log n_{k_{j}}}}\right)\longrightarrow 1 \mbox{ almost surely.}
\end{eqnarray*} From Lemma \ref{PC} we know that for all $\lambda \in \left[-\delta, \delta \right]^{d}$
\begin{eqnarray*}
 \overline{M}_{n}\left(\frac{\lambda}{\sqrt{\log n}}\right)\stackrel{p}{\longrightarrow} 1\mbox{ as }n \to \infty.
\end{eqnarray*}
Therefore using the standard diagonalization argument we can say that given a subsequence $\{n_{k}\}_{k\geq1}$ there exists a further subsequence $\{n_{k_{j}}\}_{j=1}^{\infty}$ such that 
for every $\lambda \in \mathbb{Q}^{d}\cap\left[-\delta,\delta \right]^{d}$
\begin{eqnarray*}
 \overline{M}_{n_{k_{j}}}\left(\frac{\lambda}{\sqrt{\log n_{k_{j}}}}\right)\longrightarrow 1 \mbox{ almost surely.}
\end{eqnarray*}
This completes the proof.
\end{proof}

\noindent
{\bf Remark:}
It is worth noting that the proofs of Theorems \ref{GRW} and \ref{ASd} go through if we assume $U_{0}$ to be 
non random probability vector such that there exists $r > 0$ with  
$\sum_{v \in \mathbb{Z}^{d}}e^{\langle\lambda,v\rangle } U_{0,v} < \infty$
whenever $\| \lambda \| < r$.

\subsection{Proofs of the Local Limit Type Results}
In this section, we present the proofs for the local limit theorems. 
As before, we present the proof for $d = 1$ first.

\subsubsection{Proof for the Local Limit Theorems for d=1}
\begin{proof}[Proof of Theorem \ref{llt1}]
Without loss of generality we may assume $\mu=0$ and $\sigma =1$. $X_{j}$ is a lattice random variable, therefore
$I_{j}X_{j}$ is also so. Now by our assumption that $\Pbold\left(X_1 = 0 \right) > 0$, we have 
$0 \in B$, therefore $I_{j}X_{j}$ and $X_{j}$ have the 
same lattice structure. Therefore $Z_{n}$ is a lattice random
variable with lattice $\mathcal{L}_{n}^{(1)}$. Applying Fourier inversion formula, for all 
$x \in \mathcal{L}_{n}^{(1)}$ we obtain 
\begin{eqnarray}
\label{FI1}
\mathbb{P}\left(\frac{Z_{n}}{\sqrt{\log n}}=x\right)
& = &  
\frac{h}{2\pi \sqrt{\log n}}
\int\limits_{-\frac{\pi \sqrt{\log n}}{h}}^{\frac{\pi \sqrt{\log n}}{h}} \! e^{-i tx}\psi_{n}(t)\, \mathrm{d}t \\
& = & 
\frac{1}{2\pi \sqrt{\log n}}
\int\limits_{-\pi \sqrt{\log n}}^{\pi \sqrt{\log n}} \! e^{-i \frac{tx}{h}} \psi_{n}\left(\frac{t}{h}\right)\, \mathrm{d}t 
\end{eqnarray} 
where $\psi_{n}\left(t\right)=\mathbb{E}\left[e^{it\frac{Z_{n}}{\sqrt{\log n}}}\right].$ 
Notice that without loss of any generality, we now can assume $h=1$. 
Also by Fourier inversion formula, for all $x \in \mathbb{R}$
\begin{equation}
\label{FI2}
\phi(x)=\frac{1}{2 \pi }\displaystyle\int\limits_{-\infty}^{\infty}\!{e^{-itx}e^{\frac{-t^{2}}{2}}\, \mathrm{d}t}.
\end{equation}
Given $\epsilon>0$, there exists $N$ large enough such that for all $n\geq N$
\begin{eqnarray*}
&      &  \Big{\lvert}\sqrt{ \log n}\mathbb{P}\left(\frac{Z_{n}}{\sqrt{\log n}}=x\right)-\phi(x)\Big{\rvert} \\
& \leq & \int\limits_{-\pi \sqrt{\log n}}^{\pi\sqrt{\log n}} \! 
         \Big{\lvert}\psi_{n}(t)-e^{\frac{-t^{2}}{2}}\Big{\rvert}\, \mathrm{d}t 
         + 2 \int\limits_{\left[-\pi \sqrt{\log n},\pi \sqrt{\log n}\right]^{c}} \! \phi(t) \, \mathrm{d}t \\
& \leq & \int\limits_{-\pi \sqrt{\log n}}^{\pi \sqrt{\log n}} \! 
         \Big{\lvert}\psi_{n}(t)-e^{\frac{-t^{2}}{2}}\Big{\rvert} \, \mathrm{d}t +\epsilon.
\end{eqnarray*}
Given $M>0$, we can write for all $n$ large enough
\begin{eqnarray}
         \int\limits_{-\pi \sqrt{\log n}}^{\pi \sqrt{\log n}} \!
         \Big{\lvert}{\psi_{n}(t)-e^{\frac{-t^{2}}{2}}\Big{\rvert}\, \mathrm{d}t}
& \leq & \int\limits_{-M}^{M}{\!\Big{\lvert}\psi_{n}(t)-e^{\frac{-t^{2}}{2}}\Big{\rvert}\, \mathrm{d}t} 
         + \int\limits_{M}^{\pi \sqrt{\log n}}{\!\Big{\lvert }\psi_{n}(t)\Big{\rvert}\,\mathrm{d}t} \nonumber\\
&      & \quad +2\displaystyle\int\limits_{M}^{\pi \sqrt{\log n}}\!{e^{\frac{-t^{2}}{2}}\,\mathrm{d}t}. \label{L}
\end{eqnarray}
Given $\epsilon>0$, we choose an $M>0$ such that 
\[
\int\limits \limits_{\left[-M,M\right]^{c}}\!{e^{\frac{-t^{2}}{2}}\,\mathrm{d}t} < \epsilon.
\] Therefore,
\begin{eqnarray}\label{normal}
 \displaystyle \int\limits_{M}^{\pi \sqrt{\log n}}\!{e^{-\frac{t^{2}}{2}}\, \mathrm{d}t}\leq \int\limits \limits_{\left[-M,M\right]^{c}}\!{e^{\frac{-t^{2}}{2}}\,\mathrm{d}t} < \epsilon .
\end{eqnarray} 
 We know from Theorem \ref{GRW} that as $n \to \infty$, $\frac{Z_{n}}{\sqrt{\log n}}\Rightarrow N(0,1)$. Hence
 for all $t \in \mathbb{R}$, $\psi_{n}(t)\longrightarrow 
 e^{\frac{-t^{2}}{2}}$. Therefore, for the chosen $M>0$, by bounded convergence theorem we get as $n \to \infty$
 \begin{eqnarray*}\label{BC}
 \displaystyle\int\limits_{-M}^{M}\!{\Big{\lvert} \psi_{n}(t)-e^{\frac{-t^{2}}{2}}\Big{\rvert}\,\mathrm{d}t}\longrightarrow 0.
 \end{eqnarray*}
 Let
 \begin{eqnarray*}
 \mathcal{I}(n)=\displaystyle\int\limits_{M}^{\pi \sqrt{\log n}}\!{\Big{\lvert} \psi_{n}(t)\Big{\rvert}\,\mathrm{d}t}.
 \end{eqnarray*} 
We will show that as $n \to \infty$, $\mathcal{I}(n)\longrightarrow 0$.
Since 
$Z_{n}\stackrel{d}{=}\textstyle\sum_{j=1}^{n}I_{j}X_{j}$, therefore
\begin{align}
\nonumber \mathbb{E}\left[e^{itZ_{n}}\right]&=\displaystyle \prod_{j=1}^{n}\left(1-\frac{1}{j+1}+\frac{e\left(i 
t\right)}{j+1}\right)\\
\nonumber &=\frac{1}{n+1}\Pi_{n}\left(e\left(it\right)\right)
\end{align} where $e\left(it\right)=\mathbb{E}\left[e^{itX_{1}}\right]$. Therefore,
\[\psi_{n}(t)= \mathbb{E}\left[e^{it\frac{Z_{n}}{\sqrt{\log n}}}\right]=\frac{1}{n+1}\Pi_{n}\left(e(it/\sqrt{\log n})\right).\]
 Applying the change of variables $\frac{t}{\sqrt{\log n}}=w$, we obtain 
 \begin{eqnarray}\label{I}
  \mathcal{I}(n)=\sqrt{\log n}\displaystyle \int\limits_{M/\sqrt{\log n}}^{\pi}\!{\Big{\lvert}\psi_{n}\left(w\sqrt{\log n}\right)\Big{\rvert}\,\mathrm{d}w}.
 \end{eqnarray}
Now there exists $\delta>0$, such that for all $t \in \left(0,\delta\right)$ (see pages 133 of \cite{Durr10})  
\begin{eqnarray}\label{Delta}
 \lvert e\left(it\right)\rvert\leq 1-\frac{t^{2}}{4}.  
\end{eqnarray}
Therefore using the inequality $1-x\leq e^{-x}$, we obtain 
$1-\frac{1}{j+1}+\frac{\lvert e\left(it\right)\rvert}{j+1}\leq e^{-\frac{1}{j+1}\frac{t^{2}}{4}}$.
Hence, for all $t \in \left(0,\delta\right)$
\begin{eqnarray}\label{exp}
 \frac{1}{n+1}\lvert\Pi_{n}\left(e\left(it\right)\right)\rvert \leq e^{-\frac{t^{2}}{4}\displaystyle 
\sum_{j=1}^{n}\frac{1}{j+1}}.
\end{eqnarray} 
We observe from (\ref{I}) that we can write 
\begin{eqnarray*} 
\mathcal{I}(n)=\sqrt{\log n}\displaystyle\int\limits_{M/\sqrt{\log n}}^{\delta}\!{\Big{\lvert}\psi_{n}\left(w\sqrt{\log n}\right)\Big{\rvert}\,\mathrm{d}w}+\sqrt{\log n}\displaystyle\int\limits_{\delta}^{\pi}\!{\Big{\lvert}\psi_{n}\left(w\sqrt{\log n}\right)\Big{\rvert}\,
\mathrm{d}w}.
\end{eqnarray*}Let us write 
\begin{eqnarray*} 
 \mathcal{I}_1(n)=\sqrt{\log n}\displaystyle\int\limits_{M/\sqrt{\log n}}^{\delta}\!{\Big{\lvert}\psi_{n}\left(w\sqrt{\log n}\right)\Big{\rvert} \mathrm{d}w}
\end{eqnarray*}
and 
\begin{eqnarray*}
 \mathcal{I}_{2}(n)=\sqrt{\log n}\displaystyle\int\limits_{\delta}^{\pi}\!{\left\lvert\psi_{n}\left(w\sqrt{\log n}\right)\right\rvert\,\mathrm{d}w}.
\end{eqnarray*}
From (\ref{exp}) we have 
$ \mathcal{I}_1(n)\longrightarrow 0, \mbox{ as } n \to \infty.$
  
Since we have assumed $h=1$ so for all $t \in \left[\delta,2\pi\right)$, 
$\lvert e\left(it\right)\rvert<1$. The characteristic function being continuous in $t$, there exists $0<\eta<1$
such that $\lvert e\left(it\right)\rvert\leq \eta$ for all $t\in \left[\delta, \pi\right]$.
Therefore 
\begin{eqnarray*}
 1-\frac{1}{j+1}+\frac{\lvert e\left(it\right)\rvert}{j+1}\leq 1-\frac{1}{j+1}+\frac{\eta }{j+1}\leq e^{-\frac{1-\eta}{j+1}}.
\end{eqnarray*}
It follows that 
\begin{eqnarray*}
 \frac{1}{n+1}\lvert\Pi_{n}\left(e\left(it\right)\right)\rvert\leq e^{-\displaystyle \sum_{j=1}^{n}\frac{1-\eta}{j+1}}
\leq C_{2}e^{-\left(1-\eta\right)\log n}
\end{eqnarray*} where $C_{2}$ is some positive constant.
So as $n \to \infty$
\begin{eqnarray*}
 \mathcal{I}_2(n)\leq C_{2} e^{-\left(1-\eta\right)\log n}\left(\pi-\delta \right)\sqrt{\log n}\longrightarrow 0. 
\end{eqnarray*}
Combining the facts that 
$\mathcal{I}_1(n)\longrightarrow 0,\mbox{ }\mathcal{I}_2(n)\longrightarrow 0\mbox{ as }n \to \infty$ 
and from (\ref{L}) and
(\ref{normal}), the proof is complete.
\end{proof}

Next we prove Theorem \ref{llt4}. 
\begin{proof}[Proof of Theorem \ref{llt4}]
In this case $\Pbold\left(X_1 = 1 \right) = \Pbold\left(X_1 = -1\right) = \frac{1}{2}$. Thus
the span of $X_1$ is $2$. 
The random variables $I_1 X_1$ is supported on the set $\left\{0,1,-1\right\}$ and it has span $1$.  
We have $\mu =0$ and $\sigma=1$, so from equation \ref{Equ:Def-L^1} we get 
$\mathcal{L}_{n}^{(1)}=\frac{1}{\sqrt{\log n}}\mathbb{Z}$.

For all $x \in \mathcal{L}_{n}^{(1)}$, we obtain by Fourier Inversion formula, 
 \begin{eqnarray*}
 \mathbb{P}\left(\frac{Z_{n}}{\sqrt{\log n}}=x\right)=\frac{1}{2\pi \sqrt{\log n}}\displaystyle
 \int\limits_{-\pi \sqrt{\log n}}^{\pi \sqrt{\log n}}\!{e^{-i tx}\psi_{n}(t)\, \mathrm{d}t} 
 \end{eqnarray*} where $\psi_{n}\left(t\right)=\mathbb{E}\left[e^{it\frac{Z_{n}}{\sqrt{\log n}}}\right].$
Furthermore, by Fourier inversion formula, for all $x \in \mathbb{R}$
\begin{eqnarray*}
\phi(x)=\frac{1}{2 \pi }\displaystyle\int\limits_{-\infty}^{\infty}\!{e^{-itx}e^{\frac{-t^{2}}{2}}\, \mathrm{d}t}.
\end{eqnarray*}
The proof of this theorem is also very similar to that of Theorem \ref{llt1}.
	The bounds for $\Big{\lvert}  \sqrt{ \log n}\mathbb{P}\left(\frac{Z_{n}}{\sqrt{\log n}}=x\right)-\phi(x) \Big{\rvert }$ are similar to that in the proof of
Theorem \ref{llt1} except for that of $\mathcal{I}_2(n)$ 
where 
\begin{eqnarray*}
 \mathcal{I}_2(n)=\sqrt{\log n}\displaystyle\int\limits_{\delta}^{\pi}\!{\left\lvert\psi_{n}\left(w\sqrt{\log n}\right)\right\rvert\,\mathrm{d}w}
\end{eqnarray*} and $\delta $ is chosen as in (\ref{Delta}).
To show that $\mathcal{I}_2(n)\longrightarrow 0 \mbox{ as } n \to \infty$, we observe that 
\begin{align}
\nonumber \mathbb{E}\left[e^{itZ_{n}}\right]&=\displaystyle \prod_{j=1}^{n}\left(1-\frac{1}{j+1}+\frac{\cos t 
}{j+1}\right)\\
\nonumber &=\frac{1}{n+1}\Pi_{n}\left(\cos t\right)
\end{align} since $\mathbb{E}\left[e^{itX_{1}}\right]=\cos t$. Therefore,
\[\psi_{n}(w\sqrt{\log n})= \mathbb{E}\left[e^{iwZ_{n}}\right]=\frac{1}{n+1}\Pi_{n}\left(\cos w\right).\]
We note that $\cos w$ is decreasing in $\left[\frac{\pi}{2},\pi\right]$ and for all $w \in \left[\frac{\pi}{2}, \pi\right] \mbox{ },-1\leq \cos w \leq 0$. Therefore, there exists $\eta >0$( small enough) such that $\left[\pi-\eta, \pi\right)\subset \left(\frac{\pi}{2}, \pi\right] $ and for all $w \in \left[\pi-\eta, \pi\right)$ we have
$-1< \cos(\pi-\eta)<0$ and  
\begin{eqnarray*}
\Big{\lvert}\psi_{n}(w\sqrt{\log n})\Big{\rvert}\leq \frac{1}{n+1}\Pi_{n}\left(\cos (\pi-\eta)\right).
\end{eqnarray*}
Since $-1<\cos (\pi-\eta)<0$, so for all $j\geq1,$ $\left(1+\frac{\cos (\pi -\eta)}{j}\right)<1$. Therefore,
\begin{eqnarray}\label{J2}
\Pi_{n}\left(\cos (\pi-\eta)\right)\leq 1.
\end{eqnarray}
 Let us write 
\[\mathcal{I}_2(n)=\mathcal{J}_{1}(n)+\mathcal{J}_2(n)
\] where 
\begin{eqnarray}\label{J1}
\mathcal{J}_1(n)= \sqrt{\log n}\displaystyle\int\limits_{\delta}^{\pi-\eta}\!{\left\lvert\psi_{n}\left(w\sqrt{\log n}\right)\right\rvert\,\mathrm{d}w}
\end{eqnarray} and 
\begin{eqnarray*}
\mathcal{J}_2(n)= \sqrt{\log n}\displaystyle\int\limits_{\pi -\eta}^{\pi}\!{\left\lvert\psi_{n}\left(w\sqrt{\log n}\right)\right\rvert\,\mathrm{d}w}.
\end{eqnarray*}
It is easy to see from (\ref{J2}) that
\begin{eqnarray*}
\mathcal{J}_2(n)\leq \frac{\eta}{n+1}\sqrt{\log n}\longrightarrow 0 \mbox{ as } n \to \infty.
\end{eqnarray*}
For all $t\in \left[\delta, \pi-\eta \right],\mbox{ } 0\leq \lvert \cos t\rvert <1$, so there exists $0<\alpha<1$ such that 
$0\leq \lvert \cos t \rvert \leq \alpha$ for all $t\in \left[\delta, \pi-\eta \right]$. Recall that 
\[\psi_{n}(w\sqrt{\log n})=\prod_{j=1}^{n}\left(1-\frac{1}{j+1}+\frac{\cos w}{j+1}\right). 
\] Using the inequality $1-x\leq e^{-x}$, it follows that for all $t\in \left[\delta, \pi-\eta \right]$
\begin{eqnarray*}
1-\frac{1}{j+1}+\frac{\lvert \cos t\rvert}{j+1}\leq 1-\frac{1}{j+1}+\frac{\alpha }{j+1}\leq e^{-\frac{1-\alpha}{j+1}}
\end{eqnarray*}
and hence
\begin{eqnarray*}
 \frac{1}{n+1}\lvert\Pi_{n}\left(\cos t\right)\rvert\leq e^{-\displaystyle \sum_{j=1}^{n}\frac{1-\alpha}{j+1}}
\leq Ce^{-\left(1-\eta\right)\log n}
\end{eqnarray*} where $C$ is some positive constant. Therefore from (\ref{J1}) we obtain as $n \to \infty$
\begin{eqnarray*}
 \mathcal{J}_1(n)\leq C e^{-\left(1-\alpha \right)\log n}\left(\pi-\eta -\delta \right)\sqrt{\log n}\longrightarrow 0. 
\end{eqnarray*}
\end{proof}

\subsubsection{Proofs for the Local Limit Type Results for $d\geq2$}
\begin{proof}[Proof of Theorem \ref{llt2} ]
 Without loss of generality we may assume that $\bmu=0$ and $\varSigma=\mathbb{I}_{d}$. 
$X_{j}$ being a lattice random variable, $I_{j}X_{j}$ is also so. 
By our assumption $\Pbold\left(X_1 = 0 \right) > 0$, so 
$0 \in B$, therefore
$X_{j}$ and $I_{j}X_{j}$ are supported on the same lattice.
For $A\subset \mathbb{R}^{d}$ and $x\in \mathbb{R}$, we define
$xA=\{xy\colon y \in A\}$.
By Fourier inversion formula (see 21.28 on page 230 of \cite{BhRa76}), we get for $x\in \mathcal{L}_{n}^{(d)}$
\begin{eqnarray*}
 \mathbb{P}\left(\frac{Z_{n}}{\sqrt{\log n}}=x\right)=\frac{l}{(2\pi\sqrt{\log n})^{d}}\int\limits\limits_{(\sqrt{\log n}\mathcal{F}^{*})}
\!{\psi_{n}(t)e^{-i\langle t,x\rangle}\, \mathrm{d}t} 
\end{eqnarray*} where $\psi_{n}(t)=\mathbb{E}\left[e^{i\langle t,\frac{Z_{n}}{\sqrt{\log n}}\rangle}\right]$, $l=\lvert\text{det}\left(\mathcal{L}\right)\rvert$ and
$\mathcal{F}^{*}$ is the fundamental domain for $X_{1}$ as defined in equation(21.22) on page 229 of \cite{BhRa76}.
Also by Fourier inversion formula
\begin{eqnarray*}
 \phi_{d}(x)=\frac{1}{(2\pi)^{d}}\int\limits\limits_{\mathbb{R}^{d}}\!
 {e^{-i\langle t,x\rangle}e^{-\frac{\|t\|^{2}}{2}}\,\mathrm{d}t}.
\end{eqnarray*}
Given $\epsilon>0$, there exists $N>0$ such that $n\geq N$,
\begin{eqnarray*}
&  & \Big{\lvert} \frac{\left(\sqrt{\log n} \right)^{d}}{l}
\mathbb{P}\left(\frac{Z_{n}}{\sqrt{\log n}}=x\right)-
\phi_{d}(x)\Big{\rvert } \\
&\leq & \frac{1}{(2\pi)^{d}}\int\limits\limits_{(\sqrt{\log n}\mathcal{F}^{*})}\!
\Big{\lvert}\psi_{n}(t)-
e^{-\frac{\|t\|^{2}}{2}}\Big{\rvert} \, \mathrm{d}t
+\frac{1}{(2\pi)^{d}}\int\limits \limits_{\mathbb{R}^{d}\setminus \sqrt{\log n}\mathcal{F}^{*} }\!{e^{-\frac{\|t\|^{2}}{2}}\,\mathrm{d}t}\\
&\leq & \frac{1}{(2\pi)^{d}}\int\limits\limits_{(\sqrt{\log n}\mathcal{F}^{*})}\!{\Big{\lvert}\psi_{n}(t)-
e^{-\frac{\|t\|^{2}}{2}}\Big{\rvert}\, \mathrm{d}t}+ \epsilon.
\end{eqnarray*}
Given any compact set $A\subset\mathbb{R}^{d}$ for all $n$ large enough 
\begin{eqnarray*}
         \int\limits\limits_{(\sqrt{\log n}\mathcal{F}^{*})} \! 
         {\Big{\lvert}\psi_{n}(t)-e^{-\frac{\|t\|^{2}}{2}}\Big{\rvert}\, \mathrm{d}t}
& \leq & \int\limits\limits_{A} \! \Big{\lvert}\psi_{n}(t)- e^{-\frac{\|t\|^{2}}{2}}\Big{\rvert}\, \mathrm{d}t
         + \int\limits_{(\sqrt{\log n}\mathcal{F}^{*})\setminus A} \!
         \Big{\lvert}\psi_{n}(t)\Big{\rvert}\, \mathrm{d}t \\
&      & \qquad \qquad + \int\limits_{\mathbb{R}^{d}\setminus A}\! e^{-\frac{\|t\|^{2}}{2}}\, \mathrm{d}t.
\end{eqnarray*}
By Theorem \ref{GRW}, we know that $\frac{Z_{n}}{\sqrt{\log n}}\Rightarrow N_{d}(0,\mathbb{I}_{d})$ as $n \to \infty$. Therefore,
for any compact set $A\subset \mathbb{R}^{d}$ by bounded convergence theorem,
\begin{eqnarray*}
 \int\limits\limits_{A}\!{\Big{\lvert} \psi_{n}(t)-e^{-\frac{\|
t\|^{2}}{2}}\Big{\rvert}\mathrm {d}t} \longrightarrow 0 \mbox{ as } n \to \infty.
\end{eqnarray*} 
Choose $A$ such that 
\begin{eqnarray*}
 \int\limits\limits_{A^{c}}\!{e^{-\frac{\|t\|^{2}}{2}}\,\mathrm{d}t}<\epsilon.
\end{eqnarray*}
Let us write
\begin{eqnarray}\label{Int}
 \mathcal{I}(n)=\int\limits\limits _{(\sqrt{\log n}\mathcal{F}^{*})\setminus A}\!{\Big{\lvert}\psi_{n}(t)\Big{\rvert}\,\mathrm{d}t}.
\end{eqnarray}
For the above choice of $A$, we will show that 
\begin{eqnarray*}
\mathcal{I}(n)\longrightarrow 0 \mbox{ as } n \to \infty.
\end{eqnarray*}
Since $Z_{n}\stackrel{d}{=}\textstyle \sum_{j=1}^{n}I_{j}X_{j}$, we have
\begin{align}
\nonumber \mathbb{E}\left[e^{i\langle t,Z_{n}\rangle}\right]&=\displaystyle \prod_{j=1}^{n}\left(1-\frac{1}{j+1}+\frac{e\left(i 
t\right)}{j+1}\right)\\
\nonumber &=\frac{1}{n+1}\Pi_{n}\left(e\left(it\right)\right)
\end{align} where $e\left(it\right)=\mathbb{E}\left[e^{i\langle t,X_{1}\rangle}\right]$. So,
\[\psi_{n}(t)=\mathbb{E}\left[e^{i\langle t,\frac{Z_{n}}{\sqrt{\log n}}\rangle}\right]=\frac{1}{n+1}\Pi_{n}\left(e\left(\frac{1}{\sqrt{\log n}}it\right)\right).\]
Applying the change of variables $t=\frac{1}{\sqrt{\log n}}w$ to (\ref{Int}), we obtain 
\begin{eqnarray}\label{CV}
 \mathcal{I}(n)=(\sqrt{\log n})^{d}\int\limits\limits_{\mathcal{F}^{*}\setminus \frac{1}{\sqrt{\log n}}A}\!{\Big{\lvert}\psi_{n}\left(\sqrt{\log n}w\right)\Big{\rvert}\,\mathrm{d}w}.
\end{eqnarray}
We can choose a $\delta>0$, such that for all $w\in B(0,\delta)\setminus \{0\}$ there exists $b>0$ such that 
\begin{eqnarray}\label{RWB}
 \lvert e(iw)\rvert \leq 1-\frac{b\|w\|^{2}}{2},
\end{eqnarray}
(see Lemma 2.3.2(a) of \cite{LaLi10} for a proof).
Therefore, using the inequality $1-x\leq e^{-x}$ we have
\begin{eqnarray}
 \nonumber\lvert\psi_{n}(\sqrt{\log n}w)\rvert &=& \frac{1}{n+1}\lvert\Pi_{n}\left(e(iw)\right)\rvert\\
 \nonumber &\leq& \displaystyle \prod_{j=1}^{n+1}\left(1-\frac{1}{j+1}+\frac{\lvert e(iw)\rvert}{j+1}\right)\\
\label{Dbound} &\leq & e^{-\displaystyle\sum_{j=1}^{n}\frac{b}{j+1}\frac{\|w\|^{2}}{2}} \leq C_{1}e^{-b  \frac{\|w\|^{2}}{2}\log n}
\end{eqnarray} for some positive constant $C_{1}$. 
From (\ref{CV}) we can write
\begin{eqnarray*}
 \mathcal{I}(n)=\mathcal{I}_1(n)+ \mathcal{I}_2(n)
\end{eqnarray*}
 where 
\begin{eqnarray*} 
 \mathcal{I}_1(n)=(\sqrt{\log n})^{d}\int\limits\limits_{\left(B(0,\delta)\setminus \frac{1}{\sqrt{\log n}}A\right)\cap \mathcal{F}^{*}}\!{\lvert{\psi_{n}
\left(\sqrt{\log n}w\right)\rvert}\,\mathrm{d}w}
\end{eqnarray*} and
\begin{eqnarray*}
 \mathcal{I}_2(n)=(\sqrt{\log n})^{d}\int\limits_{\mathcal{F}^{*}\setminus B(0,\delta)}{\lvert\psi_{n}\left(\sqrt{\log n}w\right)\rvert dw}.
\end{eqnarray*}
Since (\ref{Dbound}) holds, given $\epsilon>0$, we have for all $n$ large enough
\begin{eqnarray}\label{I1}
 \mathcal{I}_1(n)\leq (\sqrt{\log n})^{d}\int\limits\limits_{B(0,\delta)\setminus \frac{A}{\sqrt{\log n}}}\!{C_{1}e^{-b\frac{\|w\|^{2}}{2}\log n}\,\mathrm{d}w}\leq \epsilon.
\end{eqnarray} 
 Since the lattices for $X_1$ and $I_1 X_1$ are same, for all $w \in \mathcal{F}^{*}\setminus B(0,\delta)$, we get \\
$\lvert e(iw)\rvert<1$, so there exists an $0<\eta<1$, such that 
$\lvert e(iw)\rvert \leq \eta$. Therefore, using the inequality $1-x\leq e^{-x}$, we obtain
\begin{eqnarray}\label{Exp1}
 \lvert\psi_{n}(\sqrt{\log n}w)\rvert\leq e^{- \sum_{j=i}^{n}\frac{1}{j+1}(1-\eta)}\leq C_{2}e^{-(1-\eta)\log n}
\end{eqnarray} for some positive constant $C_{2}$.
Therefore, using equation (21.25) on page 230 of \cite{BhRa76} we obtain 
\begin{eqnarray*}
 \mathcal{I}_2(n)\leq C'_{2}(\sqrt{\log n})^{d}e^{-(1-\eta)\log n}\longrightarrow 0 \mbox{ as } n \to \infty
\end{eqnarray*} where $C'_{2}$ is an appropriate positive constant.
\end{proof}

\begin{proof}[Proof of the Theorem \ref{llt5}]
In this case $\Pbold\left(X_1 = \pm e_i\right) = \frac{1}{2d}$ for $1 \leq i \leq d$, where $e_i$ is the
$i$-th unit vector in direction $i$, thus $\mu = 0$ and $\varSigma = \frac{1}{d} {\mathbb I}_d$.

For notional simplicity we consider the case $d = 2$, the general case can be written similarly. 

Now for each $j \in \mathbb{N}$, $I_{j}X_{j}$ is a lattice random vector with the minimal lattice $\mathbb{Z}^{2}$. It is easy to note that 
$2\pi \mathbb{Z}\times 2\pi\mathbb{Z}$ is the set of all periods for $I_{j}X_{j}$ and its fundamental domain is given by $\left(-\pi,\pi\right)^{2}$. 
To prove (\ref{lltSSRW}), it is enough to show
\begin{eqnarray*}
\displaystyle \sup_{x \in \frac{1}{\sqrt{2}}\mathcal{L}_{n}^{(2)}} \left\vert \left(\log n\right)
\mathbb{P}\left(\frac{Z_{n}}{\sqrt{\log n}}=x\right)-
\phi_{2,\frac{1}{2}\mathbb{I}_{2}}(x) \right\vert \longrightarrow 0 \mbox{ as } n \to \infty,
\end{eqnarray*} where $\phi_{2,\frac{1}{2}\mathbb{I}_{2}}(x)=\frac{1}{\pi}e^{-\|x\|^2}$ is the
bivariate normal density with mean vector $0$ and variance-covariance matrix $\frac{1}{2}\mathbb{I}_{2}$
and 
$\frac{1}{\sqrt{2}}\mathcal{L}_{n}^{(2)}=\frac{1}{{\sqrt{\log n}}}\mathbb{Z}^{2}$.
By Fourier inversion formula (see 21.28 on page 230 of \cite{BhRa76}), 
we get for $x\in \frac{1}{\sqrt{2}}\mathcal{L}_{n}^{(2)}$,
\begin{eqnarray*}
 \mathbb{P}\left(\frac{Z_{n}}{\sqrt{\log n}}=x\right)=\frac{1}{(2\pi)^{2} \log n}\int\limits\limits_{\left(-\sqrt{\log n}\pi,\sqrt{\log n}\pi\right)^{2}}
\!{\psi_{n}(t)e^{-i\langle t,x\rangle}\, \mathrm{d}t}. 
\end{eqnarray*} 
Also by Fourier inversion formula
\begin{eqnarray*}
 \phi_{2,\frac{1}{2}\mathbb{I}_{2}}(x)=\frac{1}{(2\pi)^{2}}\int\limits\limits_{\mathbb{R}^{2}}\!{e^{-i\langle t,x\rangle}e^{-\frac{\|t\|^{2}}{4}}\,\mathrm{d}t}.
\end{eqnarray*} Let us write $H_{n}=\left(-\sqrt{\log n}\pi,\sqrt{\log n}\pi\right)^{2}.$
Given $\epsilon>0$, there exists $N>0$ such that $n\geq N$,
\begin{eqnarray*}
\nonumber \Big{\lvert} \log n \,
\mathbb{P}\left(\frac{Z_{n}}{\sqrt{\log n}}\right)-
\phi_{2,\frac{1}{2}\mathbb{I}_{2}}(x)\Big{\rvert }
& \leq & \frac{1}{(2\pi)^{2}}\int\limits\limits_{H_{n}}\!{\Big{\lvert}\psi_{n}(t)-
e^{-\frac{\|t\|^{2}}{4}}\Big{\rvert}\, \mathrm{d}t}\\
&\mbox{ }& \quad + \frac{1}{(2\pi)^{2}}\int\limits \limits_{\mathbb{R}^{2}\setminus H_{n}}\!{e^{-\frac{\|t\|^{2}}{4}}\,\mathrm{d}t}\\
&\leq & \frac{1}{(2\pi)^{2}}\int\limits\limits_{H_{n}}\!{\Big{\lvert}\psi_{n}(t)-
e^{-\frac{\|t\|^{2}}{4}}\Big{\rvert}\, \mathrm{d}t}+ \epsilon.
\end{eqnarray*}
Given any compact set $A\subset\mathbb{R}^{2}$, for all $n$ large enough we have 
\begin{eqnarray}\label{III}
\nonumber\int\limits\limits_{H_{n}}\!{\Big{\lvert}\psi_{n}(t)-
e^{-\frac{\|t\|^{2}}{4}}\Big{\rvert}\, \mathrm{d}t}\leq \int\limits\limits_{A}\!{\Big{\lvert}\psi_{n}(t)- e^{-\frac{\|t\|^{2}}{4}}\Big{\rvert}\, \mathrm{d}t}+ \int\limits_{H_{n}\setminus A}{\Big{\lvert}\psi_{n}(t)\Big{\rvert}\, \mathrm{d}t}+
 \int\limits_{\mathbb{R}^{2}\setminus A}\!{e^{-\frac{\|t\|^{2}}{4}}\, \mathrm{d}t}.
\end{eqnarray}
By Theorem \ref{GRW}, we know that $\frac{Z_{n}}{\sqrt{\log n}}\Rightarrow N_{2}(0,2^{-1}\mathbb{I}_{2})$ as $n \to \infty$. Therefore,
for any compact set $A\subset \mathbb{R}^{2}$ by bounded convergence theorem,
\begin{eqnarray*}
 \int\limits\limits_{A}\!{\Big{\lvert} \psi_{n}(t)-e^{-\frac{\|
t\|^{2}}{4}}\Big{\rvert}\mathrm {d}t} \longrightarrow 0 \mbox{ as } n \to \infty.
\end{eqnarray*} 
Choose $A$ such that 
\begin{eqnarray*}
 \int\limits\limits_{A^{c}}\!{e^{-\frac{\|t\|^{2}}{4}}\,\mathrm{d}t}<\epsilon.
\end{eqnarray*}
Let us write
\begin{eqnarray*}
 \mathcal{I}(n)=\int\limits\limits _{H_{n}\setminus A}\!{\Big{\lvert}\psi_{n}(t)\Big{\rvert}\,\mathrm{d}t}.
\end{eqnarray*}
For the above choice of $A$, we will show that 
\begin{eqnarray*}
\mathcal{I}(n)\longrightarrow 0 \mbox{ as } n \to \infty.
\end{eqnarray*}
Applying the change of variables $t=\frac{1}{\sqrt{\log n}}w$, we obtain 
\begin{eqnarray*}
 \mathcal{I}(n)=\log n\int\limits\limits_{\left(-\pi,\pi\right)^{2}\setminus \frac{1}{\sqrt{\log n}}A}\!{\Big{\lvert}\psi_{n}\left(\sqrt{\log n}w\right)\Big{\rvert}\,\mathrm{d}w}.
\end{eqnarray*} where for $A \subset \mathbb{R}^{d}$ and $x \in \mathbb{R}$, we write $xA=\{xy\colon y \in A\}$.
We can write
\begin{eqnarray*}
 \mathcal{I}(n)=\mathcal{I}_1(n)+ \mathcal{I}_2(n)
\end{eqnarray*}
 where 
\begin{eqnarray*} 
 \mathcal{I}_1(n)=\log n \int\limits\limits_{\left(B(0,\delta)\setminus \frac{1}{\sqrt{\log n}}A\right)\cap \left(-\pi,\pi\right)^{2}}\!{\lvert{\psi_{n}
\left(\sqrt{\log n}w\right)\rvert}\,\mathrm{d}w}
\end{eqnarray*} and
\begin{eqnarray*}
 \mathcal{I}_2(n)=\log n\int\limits\limits_{\left(-\pi,\pi\right)^{2}\setminus B(0,\delta)}{\lvert\psi_{n}\left(\sqrt{\log n}w\right)\rvert dw}.
\end{eqnarray*} where $\delta$ is as in (\ref{RWB}).
Using arguments similar to (\ref{I1}), we can show that $\mathcal{I}_1(n)\longrightarrow 0 \mbox{ as } n \to \infty.$ Therefore it is enough to show that $\mathcal{I}_2(n)\longrightarrow 0 \mbox{ as } n \to \infty.$
To do so, we first observe that for $t=\left(t^{(1)},t^{(2)}\right)\in \mathbb{R}^{2}$ the characteristic function for $X_{1}$ is given by $e\left(it\right)=\frac{1}{2}\left(\cos t^{(1)}+\cos t^{(2)}\right)$. 
If $t\in \left[-\pi,\pi\right]^{2}$  be such that 
$\lvert e\left(it\right)\rvert=1,$ then $t\in \{(\pi,\pi),(-\pi,\pi),(\pi,-\pi),(-\pi,-\pi)\}$.
The function $\cos \theta$ is continuous and decreasing as a function of $\theta $ for $t \in \left[\frac{\pi}{2},\pi\right]$. Choose $\eta >\frac{\pi}{2}$ such that for $t\in A_{1}=(-\pi,\pi)^{2}\cap B^{c}(0,\delta)\cap D^{c}, \mbox{ we have }\lvert e\left(it\right)\rvert<1,$ where $D=\left[\pi-\eta,\pi \right)^{2}\cup\left[-\pi-\eta,-\pi\right)\times \left[\pi-\eta,\pi\right)\cup \left[-\pi-\eta, -\pi\right)^{2}\cup \left[\pi-\eta,\pi\right)\times \left[-\pi-\eta,-\pi\right)$. Let us write 
\[\mathcal{I}_2(n)=\mathcal{J}_1(n)+\mathcal{J}_2(n)
\] where 
\begin{eqnarray*}
\mathcal{J}_1(n)=\log n\int\limits_{A_{1}}{\lvert\psi_{n}\left(\sqrt{\log n}w\right)\rvert dw}
\end{eqnarray*} and
\begin{eqnarray*}
\mathcal{J}_2(n)=\log n\int\limits_{D}{\lvert\psi_{n}\left(\sqrt{\log n}w\right)\rvert dw}\mbox{.}
\end{eqnarray*} It is easy to note that 
\[\mathcal{J}_1(n)\leq \log n \int\limits\limits_{\overline{A}_{1}}{\lvert\psi_{n}\left(\sqrt{\log n}w\right)\rvert dw}
\] where $\overline{A}_{1}$ denotes the closure of $A_{1}$. For $w \in \overline{A}_{1}$ there exists some $0<\alpha<1$ such that $\lvert e\left(it\right)\rvert\leq \alpha$. Therefore using bounds similar to that in (\ref{Exp1}) we can show that 
\[\mathcal{J}_1(n)\longrightarrow 0 \mbox{ as } n \to \infty.
\]
We observe that 
\[\mathcal{J}_2(n)\leq 4 \log n\int\limits_{\left[\pi-\eta,\pi\right]^{2}}{\lvert\psi_{n}\left(\sqrt{\log n}w\right)\rvert dw}.\] Hence, it is enough to show that 
$\log n\int\limits_{\left[\pi-\eta,\pi\right]^{2}}{\lvert\psi_{n}\left(\sqrt{\log n}w\right)\rvert dw}\longrightarrow 0$
as $n \to \infty$.   
For $w \in \left[\pi-\eta,\pi \right]^{2}$ we have $0<\lvert \left(1+\frac{e(iw)}{j}\right)\rvert\leq \left(1+\frac{\cos (\pi-\eta)}{j}\right)\leq 1$. Therefore, 
\begin{eqnarray*}
\lvert\psi_{n}(w)\rvert=\displaystyle\frac{1}{n+1}\prod_{j=1}^{n}\Big{\lvert}\left(1+\frac{e(iw)}{j}\right)\Big{\rvert}\leq \frac{1}{n+1}.
\end{eqnarray*} So,
\begin{eqnarray*}
\log n\int\limits_{\left[\pi-\eta,\pi\right]^{2}}{\lvert\psi_{n}\left(\sqrt{\log n}w\right)\rvert dw}\leq \frac{\eta^{2}}{n+1}\log n\longrightarrow 0 \mbox{ as } n \to \infty.
\end{eqnarray*}
\end{proof}

\section{Urns with Colors Indexed by Other Lattices on $\mathbb{R}^{d}$}
\label{Sec:General}
We can further generalize the urn models with color sets indexed by certain countable lattices in $\Rbold^d$.
Such a model will be associated with the corresponding random walk on the lattice. To state the results
rigorously we consider the following notations. 

Let $\{X_{i}\}_{i\geq 1}$ be a sequence of random $d$-dimensional i.i.d. vectors with non empty support 
set $B \subseteq \mathbb{R}^{d}$ and probability mass function $p$. We assume that $B$ is finite. 
Consider the countable subset
\[
S^d := \left\{\textstyle \sum_{i=1}^{k}n_{i}b_{i}\colon n_{1},n_{2},\ldots, n_{k} \in \mathbb{N},
b_{1},b_{2},\ldots,b_{k}\in B\right\}
\]
of $\Rbold^d$ which will index the set of colors. 

Like earlier we consider $S_n := X_0 + X_1 + \cdots + X_n, n \geq 0$, the random walk starting at $X_0$. 
The transition matrix for this work is given by 
\[ R := \left(\left( p\left(v - u\right) \right)\right)_{u, v \in S^d}.\]
We say a process $\left(U_n\right)_{n \geq 0}$ is a urn scheme with colors indexed by $S^d$
and replacement matrix $R$ and starting configuration $U_0$,
if $\left(U_n\right)_{n \geq 1}$ is defined recursively by the equation
\begin{equation}
\label{recurssion-S}
U_{n+1}=U_{n} + \zeta_{n+1} R 
\end{equation}
where $\zeta_{n+1} = \left(\zeta_{n+1,v}\right)_{v \in S^d}$ is such that 
$\zeta_{n+1,V}=1$ and $\zeta_{n+1,u} = 0$ if $u \neq V$ where $V$ is a random color
chosen from the configuration $U_n$. In other words for $n \geq 0$,
\[
U_{n+1}=U_n + R_V
\]
where $R_V$ is the  $V^{\text{th}}$ row of the replacement matrix $R$. 
Following the same nomenclature as done earlier,
we will call this process the
\emph{infinite color urn model associated with the random walk $\left\{S_n\right\}_{n \geq 0}$ on $S^d$}.
Naturally, when $S^d = \Zbold^d$, this process is exactly the one discussed earlier.

We will use same notations as earlier for the mean, non-centered dispersion 
matrix and moment generating function for the increment $X_1$ (see \eqref{Equ:Basic-Notations} 
for the definitions). 
Like earlier we denote by $Z_n$ the $\left(n+1\right)$-th selected color. Just like in the 
previous case, 
the expected proportion of colors in the urn at time $n$ will be given by the distribution of $Z_n$
but now on $S^d$. 

From the proof of Theorem \ref{LRW1} it follows that the result holds also for this generalization. 
This enable us to generalize
Theorem \ref{GRW} and Theorem \ref{ASd} as follows. 
\begin{theorem}
\label{GRWg}
Let $\overline{\Lambda}_{n}$ be the probability measure on $\mathbb{R}^{d}$ corresponding to the probability 
vector $\frac{1}{n+1}\left(\mathbb{E}[U_{n,v}]\right)_{v \in S^d}$ and let
\[
\overline{\Lambda}_{n}^{cs}(A)
:=\overline{\Lambda}_{n}\left(\sqrt{\log n}A\varSigma^{-1/2}+ \bmu\log n\right), \,\,\,
A \in \mathcal{B}\left(\mathbb{R}^{d}\right).
\]
Then, as $n \to \infty$,
\begin{equation}
\overline{\Lambda}_{n}^{cs}\Rightarrow \Phi_{d}.
\end{equation}
\end{theorem}
\begin{theorem}
Let $\Lambda_{n} \in \mathcal{M}_{1}$ be the random probability measure corresponding to the 
random probability vector $\frac{U_{n}}{n+1}$. Let 
\[
\Lambda^{cs}_{n}\left(A\right)
=\Lambda_{n}\left(\sqrt{\log n}A\varSigma^{-1/2}+  \bmu \log n\right).
\]
where $A$ is a Borel subset of $\Rbold^d$. 
Then, as $n \to \infty $,
\begin{equation}
\label{Eq:PrCgsg}
\Lambda_{n}^{cs}\stackrel{p}{\longrightarrow} \Phi_{d} \mbox{\ in\ } \mathcal{M}_{1}.
\end{equation}
\end{theorem}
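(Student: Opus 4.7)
The plan is to mirror the proof of Theorem~\ref{ASd} essentially verbatim, the key point being that nothing in that proof made substantive use of $S^d=\Zbold^d$ beyond properties of $R$ and $e(\lambda)$ that continue to hold in the present generality.

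First I would verify that the martingale machinery survives. With $x(\lambda):=\bigl(e^{\langle\lambda,v\rangle}\bigr)_{v\in S^d}^{T}$, one still has $Rx(\lambda)=e(\lambda)x(\lambda)$, and, because $U_0$ is supported on finitely many colors and the increments are bounded, $U_n$ has finite support, so $U_n x(\lambda)$ is a finite positive sum. The identity $\bE[\zeta_{n+1}x(\lambda)\mid\mathcal{F}_n]=\tfrac{1}{n+1}U_n x(\lambda)$ then shows that $\overline{M}_n(\lambda):=U_n x(\lambda)/\Pi_n(e(\lambda))$ is a non-negative $\mathcal{F}_n$-martingale. The second-moment recursion used to prove Theorem~\ref{Martingale} rests only on continuity and strict positivity of $2e(\lambda)-e(2\lambda)$ near $0$, which is a property of the law of $X_1$ and not of the index set; hence the uniform $L^2$ bound, and then Lemma~\ref{PC} (that is, $\overline{M}_n(\lambda/\sqrt{\log n})\cp 1$ for $\lambda\in[-\delta,\delta]^d$), go through without change.

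Next, Theorem~\ref{GRWg} (which is proved exactly like Theorem~\ref{GRW}, using the still-valid Theorem~\ref{LRW1} and the Lindeberg/Cram\'er-Wold argument) gives the deterministic convergence
\begin{equation*}
\frac{e^{-\langle\lambda,\bmu\sqrt{\log n}\rangle}}{n+1}\,\Pi_n\!\left(e\!\left(\tfrac{\lambda}{\sqrt{\log n}}\right)\right)\longrightarrow e^{\lambda\varSigma\lambda^T/2}
\end{equation*}
on a neighborhood of the origin. Writing the moment generating function of $\Lambda_n^{cs}$ in the form
\begin{equation*}
\int_{\Rbold^d}\!e^{\langle\lambda,x\rangle}\,\mathrm{d}\Lambda_n^{cs}(x)
=\frac{e^{-\langle\lambda\varSigma^{1/2},\bmu\sqrt{\log n}\rangle}}{n+1}\,\overline{M}_n\!\left(\tfrac{\lambda\varSigma^{1/2}}{\sqrt{\log n}}\right)\,\Pi_n\!\left(e\!\left(\tfrac{\lambda\varSigma^{1/2}}{\sqrt{\log n}}\right)\right)
\end{equation*}
and combining the two displays with Lemma~\ref{PC} yields, for each fixed $\lambda$ in a neighborhood of the origin,
\begin{equation*}
\int_{\Rbold^d}\!e^{\langle\lambda,x\rangle}\,\mathrm{d}\Lambda_n^{cs}(x)\cp e^{\|\lambda\|^2/2}.
\end{equation*}

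Finally, to promote this pointwise convergence-in-probability of moment generating functions to convergence in probability in $\mathcal{M}_1$, I would run the subsequence and diagonalization argument exactly as in Theorem~\ref{ASd}: from any subsequence $\{n_k\}$ extract a further subsequence $\{n_{k_j}\}$ along which the convergence above holds almost surely for every $\lambda$ in the countable set $\Qbold^d\cap[-\delta,\delta]^d$, and then apply Theorem~\ref{Rational2} of the appendix to conclude $\Lambda^{cs}_{n_{k_j}}\stackrel{w}{\longrightarrow}\Phi_d$ almost surely, which is \eqref{Eq:PrCgsg}. The only thing to check, and the only place where one might worry, is that the appendix lemma is formulated for moment generating functions of probability measures on $\Rbold^d$ that agree in the limit on a dense rational subset of a common open neighborhood of the origin, with no assumption that the measures be supported on $\Zbold^d$; since that is indeed how it is stated, there is no real obstacle. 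No new idea is required.
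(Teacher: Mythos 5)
Your proposal is correct and takes essentially the same route as the paper, whose own proof of this statement is the one-line remark that it is ``exactly similar'' to Theorem~\ref{ASd}; your write-up is just a careful verification that each ingredient (eigenvector relation, finite support of $U_n$, $L^2$ martingale bound, Lemma~\ref{PC}, Theorem~\ref{GRWg}, and Theorem~\ref{Rational2}) carries over unchanged when $\Zbold^d$ is replaced by $S^d$. If anything you are slightly more careful than the original proof of Theorem~\ref{ASd}, since you insert the $\varSigma^{1/2}$ change of variable explicitly so that the moment generating function of $\Lambda_n^{cs}$ tends to $e^{\|\lambda\|^2/2}$, matching the hypothesis of Theorem~\ref{Rational2} as stated.
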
 
The proofs of these two theorems are exactly similar to their counter parts and hence are omitted.

As an application 
we now consider a specific example, namely, the triangular lattice in two dimension. For this
the support set for the i.i.d. increment vectors is given by  
\[
B= \left\{(1,0), (-1,0),\omega, -\omega, \omega^{2}, -\omega^{2} \right\},
\]
where $\omega, \omega^{2}$ are the complex cube roots of unity (see Figure \ref{Fig:Tri}). 
The law of $X_1$ is uniform on $B$. This gives the 
random walk on the triangular lattice in two dimension.
\begin{figure}
\centering
\includegraphics [scale=0.45] {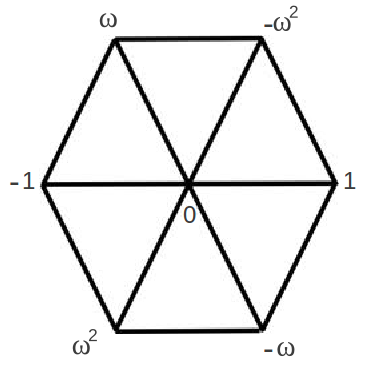}
\caption{Triangular Lattice}
\label{Fig:Tri}
\end{figure}
The following is an immediate corollary of Theorem \ref{GRWg}.
\begin{cor}
\label{TRW}
Consider the urn model associated with the random walk on two dimensional triangular lattice then 
as $n \rightarrow \infty$
\begin{eqnarray}
\frac{Z_{n}}{\sqrt{\log n}} \Rightarrow N_{2}\left(0,\frac{1}{2}\mathbb{I}_{2}\right).
\end{eqnarray}
\end{cor}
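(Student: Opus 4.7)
The plan is to deduce this corollary as an immediate consequence of Theorem \ref{GRWg} by computing the mean vector $\bmu$ and the covariance matrix $\varSigma$ of the increment $X_1$ for the uniform law on the six-point support set $B = \{(1,0), (-1,0), \omega, -\omega, \omega^{2}, -\omega^{2}\}$ of the triangular-lattice walk. Here I identify $\omega = e^{2\pi i/3}$ with the real vector $(-1/2,\sqrt{3}/2)$ and $\omega^2$ with $(-1/2,-\sqrt{3}/2)$.

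First I would compute $\bmu = \Ebold[X_1]$. Since $B$ is symmetric about the origin (each point in $B$ has its negative also in $B$, with equal probability $1/6$), we immediately get $\bmu = 0$. This takes care of the centering term $\bmu \log n$ appearing in Theorem \ref{GRWg}, making it vanish.

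Next I would compute the entries of $\varSigma = \Ebold[X_1^T X_1]$ directly by averaging the rank-one matrices $v^T v$ over the six vectors $v \in B$. The diagonal entries become
\[
\sigma_{11} = \tfrac{1}{6}\bigl(1 + 1 + \tfrac14 + \tfrac14 + \tfrac14 + \tfrac14\bigr) = \tfrac12,
\qquad
\sigma_{22} = \tfrac{1}{6}\bigl(0 + 0 + \tfrac34 + \tfrac34 + \tfrac34 + \tfrac34\bigr) = \tfrac12,
\]
while the off-diagonal entry $\sigma_{12}$ is a sum of four terms $\pm\sqrt{3}/4$ (times $1/6$) that pair up to cancel, giving $\sigma_{12}=0$. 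Therefore $\varSigma = \tfrac12 \Ibold_2$, and its unique positive definite square root is $\varSigma^{1/2} = (1/\sqrt{2})\,\Ibold_2$.

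Finally, I would apply Theorem \ref{GRWg} with this $\bmu$ and $\varSigma$. The theorem states that $\overline{\Lambda}_n^{cs} \stackrel{w}{\longrightarrow} \Phi_d$, which in the distributional formulation analogous to \eqref{ED2} reads
\[
\frac{Z_n - \bmu \log n}{\sqrt{\log n}}\,\varSigma^{-1/2} \Rightarrow N_2(0, \Ibold_2).
\]
Substituting $\bmu = 0$ and $\varSigma^{-1/2} = \sqrt{2}\,\Ibold_2$, the continuous-mapping theorem (equivalently, multiplying the limit by $\varSigma^{1/2}$) yields $Z_n/\sqrt{\log n} \Rightarrow N_2(0, \tfrac12 \Ibold_2)$, which is the desired statement. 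There is no real obstacle here; the only thing that requires any care is the arithmetic confirming that the triangular-lattice step distribution is isotropic, a fact which is expected because the six vectors in $B$ are the vertices of a regular hexagon and thus form a tight frame invariant under the $D_6$ symmetry group.
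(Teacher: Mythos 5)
Your proof is correct and follows the same route as the paper: compute $\bmu=0$ and $\varSigma=\tfrac12\Ibold_2$ for the uniform step distribution on the hexagon, then apply Theorem \ref{GRWg}. The only cosmetic difference is that you obtain $\bmu=0$ from the symmetry $B=-B$ and evaluate $\varSigma$ by averaging the explicit real coordinates, while the paper invokes $1+\omega+\omega^2=0$ and organizes the same arithmetic through $\mathit{Re}\,\omega$ and $\mathit{Im}\,\omega$.
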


\begin{proof}
Since $1+\omega+\omega^{2}=0$, therefore it is immediate that $\bmu=0$. Also we know that 
$\omega=\frac{1}{2}+\mathit{i}\frac{\sqrt{3}}{2}$.
Writing $\omega =\left( \mathit{Re}\mbox{ } \omega + i \mathit{Im}\mbox{ } \omega\right)$, we get
\[ 
\mathbb{E}\left[\left(X_{1}^{(1)}\right)^{2}\right]= 
\frac{2}{6}\left(1+\left(\mathit{Re}\mbox{ }
\omega\right)^{2}+ 
\left(\mathit{Re}\mbox{ }\omega^{2}\right)^{2}\right).
\]
Since $\mathit{Re}\mbox{ }\omega=\mathit{Re}\mbox{ }\omega^{2}$, 
therefore 
\[
\mathbb{E}\left[\left(X_{1}^{(1)}\right)^{2}\right]
=\frac{2}{6}\left(1+2\left(\mathit{Re}\mbox{ }\omega\right)^{2}\right)=\frac{1}{2}.
\]
Similarly, $\mathit{Im}(\omega)=-\mathit{Im}(\omega^{2})$, 
and hence $\mathbb{E}\left[\left(X_{1}^{2}\right)^{2}\right]=
\frac{2}{6}\left(\left(\mathit{Im}(\omega)\right)^{2}+\left(\mathit{Im}(\omega^{2})\right)^{2}\right)=\frac{1}{2}$.
Finally 
\[
\mathbb{E}\left[X_{1}^{(1)}X_{1}^{(2)}\right]=-\frac{2}{6}\mathit{Im}\left(1+\omega+\omega^{2}\right)=0.
\]
So $\varSigma = \frac{1}{2} {\mathbb I}_2$. The rest is just an application of Theorem \ref{GRWg}.
\end{proof}

\section*{Appendix}
We present here an elementary but technical result which we have used in the proof of Theorem \ref{ASd}.
It is really a generalization of the classical result for Laplace transform, namely, 
Theorem 22.2 of \cite{Bi95}.
\begin{theorem}\label{Rational2}
 Let $\nu_{n}$ be a sequence of probability measures on $\left(\mathbb{R}^{d}, 
\mathcal{B}(\mathbb{R}^{d})\right)$ and let $m_{n}(\cdotp)$
 be the corresponding moment generating functions. Suppose there exists $\delta>0$ such that    
$m_{n}(\lambda)\longrightarrow e^{\frac{\|\lambda\|^{2}}{2}}\mbox{ as }
n \to \infty$ for every $\lambda \in \left[-\delta, \delta\right]^{d}\cap \mathbb{Q}^{d}$,
then  as $n \to \infty$ 
\begin{equation}
\nu_{n} \Rightarrow \Phi_{d}.
\label{Equ:nu_n-to-normal}
\end{equation}
\end{theorem}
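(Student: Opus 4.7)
The plan is to argue tightness first, identify every subsequential weak limit as $\Phi_d$, and then conclude weak convergence.

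\textbf{Step 1 (Uniform exponential bounds and tightness).} Fix a rational $\delta' \in (0,\delta)$. For each of the $2^d$ vertices $\varepsilon \in \{-\delta',\delta'\}^d$, the hypothesis gives $m_n(\varepsilon) \to e^{\|\varepsilon\|^2/2}$, so $\sup_n m_n(\varepsilon) < \infty$. For any $\lambda \in [-\delta',\delta']^d$ and $x \in \Rbold^d$ one has $e^{\langle \lambda, x\rangle} = \prod_i e^{\lambda_i x_i} \le \prod_i (e^{\delta' x_i} + e^{-\delta' x_i}) = \sum_{\varepsilon \in \{\pm 1\}^d} e^{\delta' \langle \varepsilon, x\rangle}$. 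Integrating gives $m_n(\lambda) \le \sum_\varepsilon m_n(\delta'\varepsilon)$, uniformly bounded in $n$ by some constant $C$. Markov's inequality then yields $\nu_n(\{x : |x_i| > M\}) \le e^{-\delta' M}(m_n(\delta' e_i) + m_n(-\delta' e_i)) \le 2C e^{-\delta' M}$ for every coordinate $i$. This proves tightness of $\{\nu_n\}$.

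\textbf{Step 2 (Identification of subsequential limits).} Let $\nu_{n_k} \stackrel{w}{\longrightarrow} \nu$ along some subsequence. Fix any rational $\lambda \in [-\delta'/2, \delta'/2]^d$. Choosing $p>1$ with $p\lambda \in [-\delta',\delta']^d$, the $L^p$-norm of $e^{\langle\lambda,\cdot\rangle}$ under $\nu_{n_k}$ satisfies
\[
\int e^{p\langle\lambda,x\rangle}\, d\nu_{n_k}(x) = m_{n_k}(p\lambda) \le C
\]
by Step 1, so $\{e^{\langle\lambda,\cdot\rangle}\}$ is uniformly integrable along the subsequence. Combined with weak convergence and continuity of $x \mapsto e^{\langle\lambda,x\rangle}$, this yields $m_{n_k}(\lambda) \to \int e^{\langle\lambda,x\rangle}\, d\nu(x) =: m_\nu(\lambda)$. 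By the hypothesis, $m_{n_k}(\lambda) \to e^{\|\lambda\|^2/2}$, so $m_\nu(\lambda) = e^{\|\lambda\|^2/2}$ on the rational points of $[-\delta'/2,\delta'/2]^d$. Fatou's lemma applied to Step 1 shows $m_\nu$ is finite on $[-\delta',\delta']^d$, so $m_\nu$ is continuous (in fact real-analytic) on the interior. The identity $m_\nu(\lambda) = e^{\|\lambda\|^2/2}$ therefore extends from the rationals to all of $(-\delta'/2,\delta'/2)^d$ by continuity. The classical uniqueness theorem for moment generating functions (Theorem 22.2 of \cite{Bi95}) then forces $\nu = \Phi_d$.

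\textbf{Step 3 (Conclusion).} By tightness, every subsequence of $\{\nu_n\}$ has a further weakly convergent sub-subsequence, and by Step 2 the limit is necessarily $\Phi_d$. A standard subsequence argument gives $\nu_n \stackrel{w}{\longrightarrow} \Phi_d$.

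The main delicate point is Step 2, specifically upgrading pointwise MGF convergence at rationals to MGF convergence against the weak limit $\nu$: the function $e^{\langle\lambda,x\rangle}$ is unbounded, so weak convergence alone does not suffice, and one needs the $L^p$-type bound from Step 1 to obtain uniform integrability. The passage from the countable set of rationals to a full neighborhood of $0$ for the limit MGF is then routine thanks to continuity/analyticity of MGFs in the interior of their domain of finiteness.
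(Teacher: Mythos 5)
Your proof is correct and follows the same overall architecture as the paper's: tightness of $\{\nu_n\}$ via Markov's inequality at rational MGF arguments, extraction of a weakly convergent subsequence, identification of the subsequential limit through its MGF, and uniqueness of the Gaussian MGF. The genuine difference is that you are substantially more careful at the delicate step you correctly flag in your final paragraph. The paper's proof, after obtaining $\nu_{n_{k_j}} \stackrel{w}{\longrightarrow} \nu$, asserts that ``by dominated convergence theorem'' $m_{n_{k_j}}(\lambda) \to m_\infty(\lambda)$ for rational $\lambda$ in the box; but DCT does not apply directly here, since $e^{\langle\lambda,\cdot\rangle}$ is unbounded and the sequence of measures is changing, so weak convergence alone does not give convergence of these integrals without some extra control. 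Your Step 1 establishes the uniform bound $\sup_n m_n(\lambda) \leq C$ over the whole box $[-\delta',\delta']^d$ (not just at the finitely many corners the paper actually tests), and your Step 2 uses this $L^p$-boundedness ($p>1$) to get uniform integrability of $\{e^{\langle\lambda,\cdot\rangle}\}$ along the subsequence, which together with weak convergence legitimately gives $m_{n_k}(\lambda) \to m_\nu(\lambda)$. This supplies the justification the paper elides. Your Fatou argument showing $m_\nu$ is finite on $[-\delta',\delta']^d$ (hence continuous on the interior) is the lower-semicontinuity form of the Portmanteau theorem and is fine. Everything else — extension from rationals to the open box by continuity, invocation of MGF uniqueness, and the standard subsequence principle — matches the paper. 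In short: same strategy, but your write-up closes a real gap in the paper's appeal to dominated convergence.
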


\begin{proof}
Choose a  $\delta' \in \mathbb{Q}$ such that $0 < \delta' < \delta$, and observe that for every $ a > 0$ 
\[
\nu_{n}\left(\left(\left[-a,a\right]^d\right)^{c}\right) \leq 
\sum_{i=1}^d e^{-\delta' a}\left(m_{n}(-\delta' e_i) + m_{n}(\delta' e_i)\right),
\]
where $\left\{e_i\right\}_{i=1}^d$ are the $d$-unit vectors. 
Now for our assumption we get 
$m_{n}(\delta' e_i) \to e^{\frac{{\delta'}^2}{2}}$ and 
$m_{n}(-\delta' e_i)\to e^{\frac{{\delta'}^2}{2}}$ as $ n \to \infty$
for every $1 \leq i \leq d$. Thus we get
\begin{eqnarray*}
\sup_{n \geq 1} \nu_{n} \left(\left(\left[-a,a\right]^d\right)^{c}\right)
\longrightarrow 0 \mbox{ as } a \to \infty.
\end{eqnarray*}
So the sequence of probability measures $\left(\nu_n\right)_{n \geq 1}$ is tight. 
Therefore, for every subsequence $\{n_{k}\}_{k\geq 1}$ there exists a further subsequence 
$\{n_{k_{j}}\}_{j\geq 1}$ and a probability measure $\nu$ such that as $n \to \infty$,
\[ \nu_{n_{k_{j}}}  \Rightarrow \nu. \] 
Then by dominated convergence theorem 
\[ 
m_{n_{k_j}}\left(\lambda\right) \longrightarrow m_{\infty}\left(\lambda\right), \,\,\, \forall \,\,
\lambda \in \left(- \delta, \delta\right)^d \cap {\mathbb Q}^d 
\]
where $m_{\infty}$ is the moment generating function of $\nu$. 
But from our assumption 
\[ 
m_{n_{k_j}}\left(\lambda\right) \rightarrow e^{\frac{\| \lambda \|^2}{2}}, \,\,\, \forall \,\,
\lambda \in \left[-\delta, \delta\right]^d \cap \mathbb{Q^d}.
\]
So we conclude that 
\[
m_{\infty}\left(\lambda\right) = e^{\frac{\| \lambda \|^2}{2}}, , \,\,\, \forall \,\,
\lambda \in \left(- \delta, \delta\right)^d \cap {\mathbb Q}^d.
\]
Since both sides of the above equation are continuous functions on their respective domains, we get that
$m_{\infty}\left(\lambda\right) = e^{\frac{\| \lambda \|^2}{2}}$ for every
$\lambda \in \left(- \delta, \delta\right)^d$. But the standard Gaussian distribution 
is characterize by the values of 
its moment generating function in a open neighborhood of $0$, so
we conclude that every sub-sequential limit is standard Gaussian. 
This proves \eqref{Equ:nu_n-to-normal}.
\end{proof}

\section*{Acknowledgement}
The authors are grateful to Krishanu Maulik and Codina Cotar for various discussions they had with them. 

\bibliographystyle{plain}

\bibliography{Urn-Model}

\begin{thebibliography}{10}

\bibitem{AthKar68}
Krishna~B. Athreya and Samuel Karlin.
\newblock Embedding of urn schemes into continuous time {M}arkov branching
  processes and related limit theorems.
\newblock {\em Ann. Math. Statist.}, 39:1801--1817, 1968.

\bibitem{BagPal85}
A.~Bagchi and A.~K. Pal.
\newblock Asymptotic normality in the generalized {P}\'olya-{E}ggenberger urn
  model, with an application to computer data structures.
\newblock {\em SIAM J. Algebraic Discrete Methods}, 6(3):394--405, 1985.

\bibitem{BaiHu05}
Zhi-Dong Bai and Feifang Hu.
\newblock Asymptotics in randomized urn models.
\newblock {\em Ann. Appl. Probab.}, 15(1B):914--940, 2005.

\bibitem{BhRa76}
R.~N. Bhattacharya and R.~Ranga~Rao.
\newblock {\em Normal approximation and asymptotic expansions}.
\newblock John Wiley \& Sons, New York-London-Sydney, 1976.
\newblock Wiley Series in Probability and Mathematical Statistics.

\bibitem{Bi95}
Patrick Billingsley.
\newblock {\em Probability and measure}.
\newblock Wiley Series in Probability and Mathematical Statistics. John Wiley
  \& Sons Inc., New York, third edition, 1995.
\newblock A Wiley-Interscience Publication.

\bibitem{Black73}
David Blackwell.
\newblock Discreteness of {F}erguson selections.
\newblock {\em Ann. Statist.}, 1:356--358, 1973.

\bibitem{BlackMac73}
David Blackwell and James~B. MacQueen.
\newblock Ferguson distributions via {P}\'olya urn schemes.
\newblock {\em Ann. Statist.}, 1:353--355, 1973.

\bibitem{maulik1}
Arup Bose, Amites Dasgupta, and Krishanu Maulik.
\newblock Multicolor urn models with reducible replacement matrices.
\newblock {\em Bernoulli}, 15(1):279--295, 2009.

\bibitem{maulik2}
Arup Bose, Amites Dasgupta, and Krishanu Maulik.
\newblock Strong laws for balanced triangular urns.
\newblock {\em J. Appl. Probab.}, 46(2):571--584, 2009.

\bibitem{ChHsYa14}
May-Ru Chen, Shoou-Ren Hsiau, and Ting-Hsin Yang.
\newblock A new two-urn model.
\newblock {\em J. Appl. Probab.}, 51(2):590--597, 2014.

\bibitem{ChKu13}
May-Ru Chen and Markus Kuba.
\newblock On generalized {P}\'olya urn models.
\newblock {\em J. Appl. Probab.}, 50(4):1169--1186, 2013.

\bibitem{CoCoLi13}
Andrea Collevecchio, Codina Cotar, and Marco LiCalzi.
\newblock On a preferential attachment and generalized {P}\'olya's urn model.
\newblock {\em Ann. Appl. Probab.}, 23(3):1219--1253, 2013.

\bibitem{Con78}
John~B. Conway.
\newblock {\em Functions of one complex variable}, volume~11 of {\em Graduate
  Texts in Mathematics}.
\newblock Springer-Verlag, New York, second edition, 1978.

\bibitem{CoVl09}
Codina Cotar and Vlada Limic.
\newblock Attraction time for strongly reinforced walks.
\newblock {\em Ann. Appl. Probab.}, 19(5):1972--2007, 2009.

\bibitem{CrGeVolWaWa11}
Edward Crane, Nicholas Georgiou, Stanislav Volkov, Andrew~R. Wade, and
  Robert~J. Waters.
\newblock The simple harmonic urn.
\newblock {\em Ann. Probab.}, 39(6):2119--2177, 2011.

\bibitem{DasMau11}
Amites Dasgupta and Krishanu Maulik.
\newblock Strong laws for urn models with balanced replacement matrices.
\newblock {\em Electron. J. Probab.}, 16:no. 63, 1723--1749, 2011.

\bibitem{Da90}
Burgess Davis.
\newblock Reinforced random walk.
\newblock {\em Probab. Theory Related Fields}, 84(2):203--229, 1990.

\bibitem{Durr10}
Rick Durrett.
\newblock {\em Probability: theory and examples}.
\newblock Cambridge Series in Statistical and Probabilistic Mathematics.
  Cambridge University Press, Cambridge, fourth edition, 2010.

\bibitem{Du89}
Michael Dutko.
\newblock Central limit theorems for infinite urn models.
\newblock {\em Ann. Probab.}, 17(3):1255--1263, 1989.

\bibitem{Fer73}
Thomas~S. Ferguson.
\newblock A {B}ayesian analysis of some nonparametric problems.
\newblock {\em Ann. Statist.}, 1:209--230, 1973.

\bibitem{FlDuPu06}
Philippe Flajolet, Philippe Dumas, and Vincent Puyhaubert.
\newblock Some exactly solvable models of urn process theory.
\newblock In {\em Fourth {C}olloquium on {M}athematics and {C}omputer {S}cience
  {A}lgorithms, {T}rees, {C}ombinatorics and {P}robabilities}, Discrete Math.
  Theor. Comput. Sci. Proc., AG, pages 59--118. Assoc. Discrete Math. Theor.
  Comput. Sci., Nancy, 2006.

\bibitem{Free65}
David~A. Freedman.
\newblock Bernard {F}riedman's urn.
\newblock {\em Ann. Math. Statist}, 36:956--970, 1965.

\bibitem{Fri49}
Bernard Friedman.
\newblock A simple urn model.
\newblock {\em Comm. Pure Appl. Math.}, 2:59--70, 1949.

\bibitem{GneHanPit07}
Alexander Gnedin, Ben Hansen, and Jim Pitman.
\newblock Notes on the occupancy problem with infinitely many boxes: general
  asymptotics and power laws.
\newblock {\em Probab. Surv.}, 4:146--171, 2007.

\bibitem{Gouet}
Ra{\'u}l Gouet.
\newblock Strong convergence of proportions in a multicolor {P}\'olya urn.
\newblock {\em J. Appl. Probab.}, 34(2):426--435, 1997.

\bibitem{HwSv08}
Hsien-Kuei Hwang and Svante Janson.
\newblock Local limit theorems for finite and infinite urn models.
\newblock {\em Ann. Probab.}, 36(3):992--1022, 2008.

\bibitem{Svante1}
Svante Janson.
\newblock Functional limit theorems for multitype branching processes and
  generalized {P}\'olya urns.
\newblock {\em Stochastic Process. Appl.}, 110(2):177--245, 2004.

\bibitem{Svante2}
Svante Janson.
\newblock Limit theorems for triangular urn schemes.
\newblock {\em Probab. Theory Related Fields}, 134(3):417--452, 2006.

\bibitem{LaPa13}
Sophie Laruelle and Gilles Pag{\`e}s.
\newblock Randomized urn models revisited using stochastic approximation.
\newblock {\em Ann. Appl. Probab.}, 23(4):1409--1436, 2013.

\bibitem{LauLimi12}
Micka\"el Launay and Vlada Limic.
\newblock Generalized {I}nteracting {U}rn {M}odels.
\newblock available at http://arxiv.org/abs/1201.3495, 2012.

\bibitem{LaLi10}
Gregory~F. Lawler and Vlada Limic.
\newblock {\em Random walk: a modern introduction}, volume 123 of {\em
  Cambridge Studies in Advanced Mathematics}.
\newblock Cambridge University Press, Cambridge, 2010.

\bibitem{Vl03}
Vlada Limic.
\newblock Attracting edge property for a class of reinforced random walks.
\newblock {\em Ann. Probab.}, 31(3):1615--1654, 2003.

\bibitem{VlTa07}
Vlada Limic and Pierre Tarr{\`e}s.
\newblock Attracting edge and strongly edge reinforced walks.
\newblock {\em Ann. Probab.}, 35(5):1783--1806, 2007.

\bibitem{Pe90}
Robin Pemantle.
\newblock A time-dependent version of {P}\'olya's urn.
\newblock {\em J. Theoret. Probab.}, 3(4):627--637, 1990.

\bibitem{Pe07}
Robin Pemantle.
\newblock A survey of random processes with reinforcement.
\newblock {\em Probab. Surv.}, 4:1--79, 2007.

\bibitem{Polya30}
Georg P{\'o}lya.
\newblock Sur quelques points de la th\'eorie des probabilit\'es.
\newblock {\em Ann. Inst. H. Poincar\'e}, 1(2):117--161, 1930.

\bibitem{Sene06}
Eugene Seneta.
\newblock {\em Non-negative matrices and {M}arkov chains}.
\newblock Springer Series in Statistics. Springer, New York, 2006.
\newblock Revised reprint of the second (1981) edition [Springer-Verlag, New
  York; MR0719544].

\end{thebibliography}

\end{document}